\newtheorem{theorem}{Theorem}[section]
\newtheorem{corollary}[theorem]{Corollary}
\newtheorem{lemma}[theorem]{Lemma}
\newtheorem{proposition}[theorem]{Proposition}
\theoremstyle{remark}
\newtheorem{remark}[theorem]{Remark}
\newtheorem{remarks}[theorem]{Remarks}
\numberwithin{equation}{section}
\renewcommand{\subset}{\subseteq}
\newcommand{\A}{\mathbf{A}}
\newcommand{\bs}{\backslash}
\newcommand{\comment}[1]{}
\newcommand{\C}{\mathbf{C}}
\newcommand{\D}{\mathcal{D}}
\newcommand{\ds}{\displaystyle}
\newcommand{\e}{\varepsilon}
\newcommand{\E}{\mathcal{E}}
\newcommand{\F}{\mathcal{F}}
\newcommand{\g}{\gamma}
\newcommand{\GL}{\operatorname{GL}}
\newcommand{\GSp}{\operatorname{GSp}}
\newcommand{\mat}[4]{\begin{pmatrix} {#1} & {#2} \\ {#3} & {#4}
  \end{pmatrix}}
\newcommand{\meas}{\operatorname{meas}}
\renewcommand{\mod}{\text{ mod }}
\newcommand{\mt}{\eta_p^\chi}
\newcommand{\new}{\text{\it new}}
\renewcommand{\O}{\operatorname{O}}
\newcommand{\ol}{\overline}
\newcommand{\PGL}{\operatorname{PGL}}
\newcommand{\Q}{\mathbf{Q}}
\newcommand{\R}{\mathbf{R}}
\renewcommand{\Re}{\operatorname{Re}}
\newcommand{\sg}[1]{\left<{#1}\right>}
\newcommand{\smat}[4]{\bigl(\begin{smallmatrix}{#1}&{#2}\\{#3}&{#4}\end{smallmatrix}\bigr )}
\newcommand{\SL}{\operatorname{SL}}
\newcommand{\SO}{\operatorname{SO}}
\newcommand{\Sp}{\operatorname{Sp}}
\newcommand{\Supp}{\operatorname{Supp}}
\newcommand{\w}{\omega}
\newcommand{\Z}{\mathbf{Z}}
\begin{document}
\title{Weighted distribution of low-lying zeros of $\GL(2)$ $L$-functions}
\author{Andrew Knightly and Caroline Reno}
\address{Department of Mathematics \& Statistics\\University of Maine
\\Neville Hall\\ Orono, ME  04469-5752, USA }
\thanks{This work was partially supported by a grant from the Simons Foundation
  (\#317659 to the first author). Section \ref{eigen} is based in part on the
  University of Maine MA thesis of the second author.}

\begin{abstract} 
We show that if the zeros of an automorphic $L$-function are 
  weight\-ed by the central value of the $L$-function
  or a quadratic imaginary base change,
  then for certain families of holomorphic $\GL(2)$ newforms, 
  it has the effect of changing the distribution type of low-lying 
  zeros from orthogonal to symplectic, for test functions whose Fourier 
  transforms have sufficiently restricted support.
  However, if the $L$-value is twisted by a nontrivial quadratic
  character, the distribution type remains orthogonal.
  The proofs involve two vertical equidistribution results for Hecke
  eigenvalues weighted by central twisted $L$-values.  One of these
    is due to Feigon
      and Whitehouse, and the other
  is new and involves
  an asymmetric probability
  measure that has not appeared in previous equidistribution
  results for $\GL(2)$.
\end{abstract}

\maketitle
\noindent \today
\thispagestyle{empty}

\section{Introduction} \label{intro}

  According to the density conjecture of Katz and Sarnak, 
for any suitable family of $L$-functions, 
  the zeros lying close to the real axis are equidistributed according to
  one of a handful of possible symmetry types coming from compact classical groups
  (\cite{KS1}, \cite{KS2}).
  More precisely, given an $L$-function $L(s,f)$,
  denote its nontrivial zeros by $\rho_f=\frac12+i\g_f$, and define the 1-level density
\[\D(f,\phi)=\sum_{\rho_f} \phi\bigl(\frac{\g_f \log Q_f}{2\pi}\bigr),\]
where $Q_f$ is the analytic conductor of $f$, and $\phi$ is a test function.
 The conjecture predicts that
for any suitable family $\mathcal{F}=\bigcup \mathcal{F}_n$ of automorphic forms,
  with each $\mathcal{F}_n$ finite,
there exists a family $G$ of classical compact groups 
  (being one of O, SO(even), SO(odd), Sp, or U) such that for any 
  even Schwartz function $\phi$ with compactly supported Fourier transform
  $\widehat{\phi}$,
\[\lim_{n\to\infty}\frac{\sum_{f\in \mathcal{F}_n} \D(f,\phi)}
{|\mathcal F_n|} = \int_{-\infty}^\infty \phi(x)W_G(x)dx.\]
Here, $W_G(x)$ is the limiting distribution
  of the 1-level density attached to the eigenvalues of random matrices in $G$
  as the rank tends to $\infty$.
 Of particular relevance to us here are
\[W_{\O}(x) = 1+\frac12\delta_0(x)\]
and
\[W_{\Sp}(x) = 1-\frac{\sin(2\pi x)}{2\pi x},\]
where $\delta_0$ is the Dirac distribution at $0$.  As a distribution,
  $W_{\Sp}(x)$ coincides with $1-\tfrac12\delta_0(x)$ when, as will always
  be the case for us here, $\widehat{\phi}$
  is supported in $(-1,1)$.  This is a consequence of the Plancherel formula
   (\cite[(1.34)]{ILS}).

Averages involving automorphic forms are naturally studied using the trace formula.
  Many variants of the trace formula involve weighting factors, such as the harmonic weight 
  $\frac{|a_f(1)|^2}{\|f\|^2}$ that arises in the Petersson formula.
  In some cases, including that of $\GL(2)$ newforms, the presence of this weight 
  is innocuous in the sense that it does not affect the distribution of low-lying
  zeros, \cite{Mi}.
  However, in the case of zeros of $\GSp(4)$ spinor $L$-functions,
  Kowalski, Saha and Tsimmerman 
  found that the analogous harmonic weight leads to a {\em symplectic}
  distribution, despite a heuristic suggesting that the unweighted distribution 
  is {\em orthogonal}, \cite{KST}.  They gave 
  a striking interpretation of this as evidence for B\"ocherer's conjecture,
  according to which the Fourier coefficient arising in the weight contains 
  arithmetic information in the form of central $L$-values.

The question thus arises: in the simplest case of holomorphic $\GL(2)$ cusp forms,
  if we weight the low-lying zeros by central $L$-values, 
does it likewise change the distribution from orthogonal to symplectic?
The answer depends on the type of $L$-function used in the weight, as we 
  illustrate below using several families with suitably
  restricted test functions. We do not use the Petersson formula, but rather
  the relative trace formulas developed in \cite{FW} and \cite{twists}, in which
  central $L$-values appear directly.

In Theorem \ref{llz}, we consider the effect of weighting by the central
  $L$-value and a Fourier coefficient.  We show for two different families
  of holomorphic newforms that the weighted distribution of low-lying zeros 
  is symplectic
  when $\widehat{\phi}$ is supported in $(-\frac12,\frac12)$.  However,
  if the $L$-value is twisted by a nontrivial quadratic character, the
  weighted distribution is orthogonal.
In Theorem \ref{llz2},
  we show that the zeros of $L$-functions attached 
  to newforms of prime level $N\to\infty$, when weighted by an 
  imaginary quadratic base change central $L$-value, 
  have symplectic distribution for $\widehat{\phi}$ supported in $(-1,1)$.
We do not assume any version of the Generalized Riemann Hypothesis,
   though it motivates the definition of one-level density, and its use can enable one
  to extend the allowable range of support of $\widehat{\phi}$ (\cite{BBDDM},
  \cite{ILS}).
  Of course, it would be of interest to widen the range of support beyond $(-1,1)$
  because the nature of the measure $W_{\Sp}$ changes there.

\begin{theorem}\label{llz}
Let $\chi$ be a primitive real Dirichlet character of modulus $D\ge 1$. 
  Let $r>0$ be an integer relatively prime to $D$.  
    For a holomorphic newform $f(z)=\sum a_f(n)e^{2\pi i nz}$, 
  define the weight
  \begin{equation}\label{w1}
  w_f = \frac{\Lambda(\frac12,f\times \chi)|a_f(r)|^2}{\|f\|^2}
\end{equation}
for the completed $L$-function $\Lambda(s,f\times \chi)$ defined in \eqref{L} below.
Let $\phi$ be any even Schwartz function whose Fourier transform
$\widehat{\phi}(y)=\int_{-\infty}^\infty\phi(x)e^{-2\pi i xy}dx$ is
  supported inside $(-\tfrac12,\tfrac12)$.  Then
\[\lim_{n\to\infty} \frac{\sum_{f\in \mathcal{F}_n} \D(f,\phi)w_f}
  {\sum_{f\in \F_n}w_f}
  =\begin{cases}\ds\int_{-\infty}^\infty \phi(x)W_{\Sp}(x)dx,&\chi\text{ trivial}\\\\
  \ds\int_{-\infty}^\infty \phi(x)W_{\O}(x)dx,&\chi\text{ nontrivial}\end{cases}
\]
in each of the following situations:
\begin{itemize}
\item $n=k$ and $\F_n=F_k(1)$ is the set of newforms of level $1$ with the weight $k$
  ranging over even numbers satisfying $\tau(\chi)^2\neq -i^kD$ 
 for the Gauss sum $\tau(\chi)=\sum_{m=1}^D\chi(m)e^{2\pi i m/D}$.
\item 
 $\F_n=\F_k(N)^{\new}$ (with $N+k\to\infty$ as $n\to\infty$) is the set 
  of newforms of prime level $N\nmid rD$, and even weight $k>2$
chosen so that $\tau(\chi)^2=-i^kD$, or equivalently, 
  $\chi(-1)= -i^k$.
\end{itemize}
\end{theorem}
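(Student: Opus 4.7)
\emph{Proof proposal.} The plan is to combine the Weil-type explicit formula for $\D(f,\phi)$ with the relative trace formulas of Feigon--Whitehouse \cite{FW} and of \cite{twists}, which provide asymptotics for Hecke eigenvalues weighted by central $L$-values.

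First I would apply the explicit formula in its Iwaniec--Luo--Sarnak form to the normalized eigenvalues $\lambda_f(n) = a_f(n)/n^{(k-1)/2}$:
\begin{equation*}
\D(f,\phi) = \widehat{\phi}(0) + \frac{\phi(0)}{2} - 2\!\!\sum_{\nu \ge 1}\sum_p \frac{\lambda_f(p^\nu) \log p}{p^{\nu/2} \log Q_f}\,\widehat{\phi}\!\left(\frac{\nu \log p}{\log Q_f}\right) + o(1).
\end{equation*}
Since $\Supp \widehat{\phi} \subset (-\tfrac12,\tfrac12)$, only $p^\nu < Q_f^{1/2}$ contribute, and Deligne's bound disposes of $\nu \ge 3$. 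The Hecke relation $\lambda_f(p^2) = \lambda_f(p)^2 - 1$ rewrites the $\nu = 2$ term as $\lambda_f(p)^2$ together with a pure PNT sum that contributes $\tfrac12\phi(0)$ in the limit, so the problem reduces to evaluating the limits of $\mathbb{E}_w[\lambda_f(p)]$ and $\mathbb{E}_w[\lambda_f(p)^2]$ under the weight $w_f$.

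Next I would feed in the equidistribution input. For the level-$1$ family the new formula of \cite{twists} applies; for the prime-level family with $\chi(-1) = -i^k$ it is \cite{FW}. Both are proved by unfolding $L(\tfrac12, f\times\chi)$ through an approximate functional equation and applying Petersson, producing an asymptotic of the shape
\begin{equation*}
\frac{1}{\sum_f w_f}\sum_{f \in \F_n} w_f \lambda_f(m) = \frac{\chi(m)\,c_r(m)}{\sqrt{m}} + o(1) \qquad (m \ll Q_f^{1/2-\varepsilon}),
\end{equation*}
where $c_r(m)$ is a bounded multiplicative correction (trivial when $(m,r)=1$) coming from the $|a_f(r)|^2$ factor via the Hecke relations. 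The $\nu=1$ prime sum then becomes $-2\sum_p \chi(p) c_r(p)(\log p)/(p\log Q_f)\,\widehat{\phi}(\log p/\log Q_f)$; for $\chi$ trivial this is a PNT sum tending to $-\phi(0)$, while for $\chi$ a nontrivial real primitive character $\sum_p \chi(p)(\log p)/p$ converges absolutely (by the nonvanishing of $L(1,\chi)$), so dividing by $\log Q_f$ kills the contribution. The $\nu=2$ contribution, with $\mathbb{E}_w[\lambda_f(p)^2]\to 1$, exactly cancels the $\tfrac12\phi(0)$ arising from the Hecke rewrite, yielding $0$ in both cases. Combining with the explicit-formula main terms $\widehat{\phi}(0) + \tfrac12\phi(0)$ gives $\widehat{\phi}(0) - \tfrac12\phi(0)$ for $\chi$ trivial and $\widehat{\phi}(0) + \tfrac12\phi(0)$ for $\chi$ nontrivial, matching $\int \phi\, W_{\Sp}$ and $\int \phi\, W_{\O}$ respectively under the given support.

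The hard part will be the quality of the error term in the equidistribution statement above. On the geometric side of the relative trace formulas one encounters Kloosterman-type sums whose size forces $m < Q_f^{1/2-\varepsilon}$, and this is precisely what pins the allowable support at $(-\tfrac12,\tfrac12)$; enlarging it would require nontrivial cancellation among the Kloosterman terms, which the trace formulas do not provide on their own. A secondary technical issue specific to the level-$1$ case is that the probability measure from \cite{twists} is asymmetric on $[-2,2]$ (a novelty for $\GL(2)$), and one must verify that this asymmetry does not perturb the first-moment formula $\chi(p)/\sqrt{p}$ used above, which it does not because the asymmetry only influences higher moments in $p$.
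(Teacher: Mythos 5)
Your outline captures the general structure---plug weighted first- and second-moment Hecke eigenvalue estimates into the ILS explicit formula, then evaluate the resulting prime sums with the prime number theorem for arithmetic progressions---and your final bookkeeping does produce the correct densities. However, there is a genuine gap in how you propose to treat the prime-level family $\F_k(N)^{\new}$. You cite Feigon--Whitehouse \cite{FW} as the equidistribution input there, but \cite{FW} delivers the weight $\Lambda(\tfrac12,f)\Lambda(\tfrac12,f\times\chi)/\|f\|^2$ used in Theorem \ref{llz2}, not the weight $\Lambda(\tfrac12,f\times\chi)|a_f(r)|^2/\|f\|^2$ of Theorem \ref{llz}. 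The paper in fact feeds \cite{twists} into both cases, and the problem is that \cite{twists} averages over an orthogonal basis of the \emph{entire} space $S_k(N)$, not over newforms. The device you are missing is the sign condition $\tau(\chi)^2=-i^kD$ imposed on $k$: by the level-$1$ functional equation \eqref{fe}, it forces $\Lambda(\tfrac12,f\times\chi)=0$ for every level-$1$ Hecke eigenform $f$, and since the oldforms of prime level $N$ all arise from level $1$, the weights $w_h$ vanish on the entire old subspace. That is what makes the newform average coincide with the full-basis average (equation \eqref{FF}), and without it your plan has no way to restrict the \cite{twists} asymptotic to newforms.

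Two smaller points. The series $\sum_p \chi(p)\log p/p$ does \emph{not} converge absolutely; its partial sums are bounded only because $L(1,\chi)\neq 0$, i.e., by the prime number theorem for progressions, which is how the paper phrases the argument (splitting the primes by $\chi(p)=\pm1$). And the weighted second moment actually tends to $1+p^{-1}$, not $1$: the measure $\mu_{p,1/2,\chi}$ gives $\int X_2\,d\mu = \chi(p)^2/p$, hence $\int x^2\,d\mu = 1+1/p$. The $1/p$ correction is harmless after summation, and your route through the Hecke relation $\lambda_f(p^2)=\lambda_f(p)^2-1$ followed by cancellation of the extra $\tfrac12\phi(0)$ is a legitimate, slightly more circuitous variant of the paper's direct bound on the $\lambda_f(p^2)$ sum as $O(1/\log Q)$.
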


\begin{remarks}
 (1)   Iwaniec, Luo and Sarnak showed that in the unweighted case,
  the distribution is orthogonal, \cite{ILS}. 

(2) We prove Theorem \ref{llz} in \S\ref{llz1}.  It is shown there that 
  in the second case, if $k$ is fixed and $N\to \infty$, the allowable support of
   $\widehat{\phi}$ can be widened to $[-\alpha,\alpha]$ for any $0<\alpha<1-\tfrac1k$.

(3) The weights $w_f$ are nonnegative by Guo's theorem, \cite{Gu}.
  In \S\ref{llz1}, we also show that the statement of Theorem \ref{llz} remains true
  if we instead use the weight 
  $w_f = \frac{\Lambda(\frac12,f\times \chi)\ol{a_f(r)}}{\|f\|^2}$, which may be negative.
  (Hypotheses on $\mathcal{F}_n$ imply that $a_f(r)$ is real here, though elsewhere in this
  paper it may be complex.)

(4) The conditions involving $\tau(\chi)$ come from the functional
  equation \eqref{fe} when $N=1$.  Since $\chi=\ol{\chi}$, the condition 
$\frac{i^k\tau(\chi)^2}D=-1$ forces the $L$-function 
  to vanish at $s=\frac12$.
  In the first case above (where $N=1$),
  the given condition keeps this from happening,
  and guarantees that the sum of the weights is nonzero when $k$ is sufficiently large.
  In the second case where $N$ is prime, the given condition is desirable since it
  causes the weights attached to the oldforms to vanish, leaving us with an expression
  involving only newforms.
\end{remarks}

\begin{theorem}\label{llz2}
Fix a quadratic discriminant $-D<0$, and let $\chi=\chi_{-D}$ be the 
  associated primitive quadratic Dirichlet character of conductor $D$.
  Let $\F_{N}=\F_k(N)^{\new}$ be the set of holomorphic newforms of prime level $N$
   and fixed even weight $k>2$.
   For $f\in \F_{N}$, define the weight
\[w_f=\frac{\Lambda(\tfrac12,f\times \chi)\Lambda(\tfrac12,f)}{\|f\|^2}.\]
Then for any even Schwartz function $\phi$ with $\widehat{\phi}$
    supported inside $(-1,1)$, we have
\[\lim_{N\to\infty}\frac{\sum_{f\in \F_{N}}\D(f,\phi)w_f}
  {\sum_{f\in\F_{N}}w_f} = \int_{-\infty}^\infty \phi(x) W_{\Sp}(x)dx.\]
Here, $N$ ranges over prime values for which $\chi(-N)=1$.
\end{theorem}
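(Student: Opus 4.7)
The plan is to apply the explicit formula for $L(s,f)$ to convert $\D(f,\phi)$ into a prime sum, and to evaluate the resulting weighted average of Hecke eigenvalues using the relative trace formula of Feigon--Whitehouse \cite{FW}, which is built precisely for the weight $w_f$ at hand. Since $k$ is fixed and $N\to\infty$, the analytic conductor $Q_f$ is of order $N$, and the explicit formula yields
$$\D(f,\phi) = \widehat\phi(0) - \frac{2}{\log Q_f}\sum_p\sum_{\nu\ge 1}\frac{\lambda_f(p^\nu)}{p^{\nu/2}}\widehat\phi\!\left(\frac{\nu\log p}{\log Q_f}\right)\log p + o(1)$$
uniformly in $f$, where $\lambda_f(n)=a_f(n)n^{(1-k)/2}$. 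For $\widehat\phi$ supported in $(-1,1)$ the terms with $\nu\ge 3$ are $o(1)$ by Deligne's bound, so after dividing by $W_N:=\sum_{f\in\F_N}w_f$ the problem reduces to estimating
$$\mathcal{A}_N(p^\nu) := \frac{1}{W_N}\sum_{f\in\F_N} w_f\,\lambda_f(p^\nu),\qquad \nu=1,2,$$
uniformly for primes $p<N^{1/\nu}$.

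The Feigon--Whitehouse identity expands $\sum_f \frac{\Lambda(\tfrac12,f)\Lambda(\tfrac12,f\times\chi)}{\|f\|^2}\lambda_f(n)$ as a spectral main term $M_k(n)$, arising from local computations at the places of $K=\Q(\sqrt{-D})$, plus a geometric remainder $E_{N,k}(n)$ built from relative orbital integrals and Kloosterman-type sums. The hypotheses that $N$ is prime with $\chi(-N)=1$ are exactly those needed for the adelic test function in \cite{FW} to isolate the newspace at level $N$, with oldforms contributing zero. Substituting this identity and exchanging summations reduces the main calculation to the asymptotic behaviour of $M_k(p)/W_N$ and $M_k(p^2)/W_N$ inserted into the prime sums; a direct evaluation of these explicit local main terms should produce a total contribution of $-\tfrac12\phi(0)$, matching the symplectic integral $\int\phi(x)W_{\Sp}(x)dx = \widehat\phi(0) - \tfrac12\phi(0)$ that is valid for $\widehat\phi$ supported in $(-1,1)$. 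The shift of sign relative to the unweighted orthogonal density of \cite{ILS} is the $\GL(2)$ analog of the $\GSp(4)$ phenomenon of \cite{KST}, and reflects that $w_f$ is essentially the central value of the $L$-function of the base change of $f$ to $K$.

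The principal obstacle is controlling the geometric remainder $E_{N,k}(p)$ uniformly in $p$. Concretely, one must show that
$$\frac{1}{W_N\log N}\sum_p \frac{|E_{N,k}(p)|\log p}{\sqrt{p}}\,\widehat\phi\!\left(\frac{\log p}{\log N}\right) \longrightarrow 0 \quad\text{as}\quad N\to\infty,$$
with the sum extending up to $p<N$, and analogously for $E_{N,k}(p^2)$. This demands Weil-type square-root cancellation for the Kloosterman sums on the geometric side of \cite{FW}, combined with uniform estimates on the archimedean and ramified local orbital integrals. The $\nu=2$ contribution is the easier half because the effective range shrinks to $p<N^{1/2}$; the $\nu=1$ sum, extending uniformly to $p<N$, is where the full strength of the Feigon--Whitehouse geometric estimates is required, and this is where I expect the main work of the proof to lie.
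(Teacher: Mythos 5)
Your outline is broadly on the right track — explicit formula plus the Feigon--Whitehouse relative trace formula is indeed the paper's strategy — but the proposal misidentifies where the work lies, and the step you hand-wave is actually the heart of the proof.

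First, the error control. You frame the principal obstacle as bounding a geometric remainder $E_{N,k}(p)$ uniformly in $p < N$, and predict that this requires Weil-type square-root cancellation in Kloosterman-type sums. In fact this problem does not arise. The relevant case of Feigon--Whitehouse (\cite[Thm.\ 6.1]{FW}) is an \emph{exact} identity: when $N$ is prime, inert in $\Q(\sqrt{-D})$, and $N > Dp^\ell$, the geometric side collapses to the single identity-contribution term, so
\[
\frac1{\nu(N)}\sum_{f\in\F_{N}} w_f\, X_\ell(\lambda_f(p)) = c_k L(1,\chi)\int X_\ell\, d\eta_p^\chi
\]
with no remainder whatsoever. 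Since $\widehat\phi$ is supported in $(-\alpha,\alpha)$ for some $\alpha<1$, the explicit formula only involves primes $p\le Q^\alpha = (k^2N)^\alpha$, and for fixed $k$ this is $<N/D$ for all large $N$. Thus the entire prime sum lies in the exact-formula regime, and there is no Kloosterman analysis to do. (The paper separately works out an error term $O(p^{\ell(k+\frac12)}D^k N^{-k/2+\e})$ via the Ramakrishnan--Rogawski orbital integral calculations, but this is \emph{not} square-root cancellation and is also not needed for the theorem as stated.)

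Second, and more seriously, you defer the main-term computation with ``a direct evaluation ... should produce a total contribution of $-\tfrac12\phi(0)$.'' This is where the proof actually happens, and it has a nontrivial structure you have not identified. The limiting measure $\eta_p^\chi$ for this weight is asymmetric and depends on $\chi(p)$: one finds
\[
\E^w_{\F_N}(\lambda_\cdot(p)) = \begin{cases} 2p^{-1/2} & \text{if }\chi(p)=1,\\ 0 & \text{if }\chi(p)=-1,\end{cases}
\]
exactly (in the exact-formula regime), and similarly $\E^w_{\F_N}(\lambda_\cdot(p^2))$ is $3p^{-1}$ or $p^{-1}$ according as $\chi(p)=\pm1$. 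The $\nu=1$ prime sum therefore contributes
\[
-4\sum_{\chi(p)=1}\frac{\log p}{p\log Q}\,\widehat\phi\!\left(\frac{\log p}{\log Q}\right)+O\!\left(\frac{\log\log 3N}{\log Q}\right),
\]
and one needs the prime number theorem for arithmetic progressions — specifically that $\chi(p)=1$ for exactly half the primes — to see that this equals $-\phi(0)+o(1)$; the doubled eigenvalue on the $\chi(p)=1$ primes is then exactly compensated by their density $\tfrac12$. Combined with the $\widehat\phi(0)+\tfrac12\phi(0)$ from the explicit formula and the $O(1/\log Q)$ contribution of the $\nu=2$ terms, this gives $\widehat\phi(0)-\tfrac12\phi(0)=\int\phi\,W_{\Sp}$. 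Without the dichotomy in $\chi(p)$ and the appeal to PNT in arithmetic progressions, the sign flip from orthogonal to symplectic cannot be explained, and your proposal as written leaves that mechanism entirely unaccounted for.
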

\begin{remark}
    The forms $f$ may in fact be taken to range over
  the family $\F_N^+$ of newforms with epsilon factor $\e_f=1$
  since $\Lambda(\tfrac12,f)=0$ when $\e_f=-1$. The family $\F_N^+$ has
  symmetry type $\SO(\text{even})$ (\cite{ILS}).
\end{remark}

The proof is given in \S \ref{II}.
It uses a special case of the relative trace formula of Ramakrishnan and
Rogawski as extended in \cite{FW} 
by Feigon and Whitehouse. The most general version of their formula (along with the
 recent improvement \cite{FMP} by File, Martin and Pitale)
 could presumably be used to extend the scope of the above theorem.

Theorems \ref{llz} and \ref{llz2} are derived from weighted equidistribution results
  for Hecke eigenvalues at a fixed prime $p$, described in more detail below.
  In each case, the relevant measure is dependent on the
  value $\chi(p)=\pm 1$.
  This dependence plays an interesting role in the proof of the above theorems.
  From the explicit formula, we need to consider the sum over $p$ of the 
  weighted average of the $p$-th Hecke eigenvalue. Because of 
  the nature of the relevant measure, the contribution of the primes satisfying
  $\chi(p)=1$ differs from that of the primes satsifying $\chi(p)=-1$.
  We then apply the prime number theorem for arithmetic progressions to get 
  the results.

In general, the Satake parameters of holomorphic modular forms are known to satisfy
  many equidistribution laws.  Foremost is the celebrated Sato-Tate
  conjecture (proven in \cite{BLGHT}), which asserts that for a fixed non-CM 
  cusp form $f\in S_k(N)$, the sequence of normalized
  Hecke eigenvalues at the unramified primes $p$ (in their natural ordering)
  is equidistributed in $[-2,2]$ relative to the Sato-Tate measure
\begin{equation}\label{STmeas}
d\mu_\infty(x) =\begin{cases}
\frac1\pi\sqrt{1-\frac{x^2}4}\,dx & \text{if }-2\le x\le 2,\\ 
  0 &\text{otherwise}.
 \end{cases}
 \end{equation}

In a different direction, one can fix the prime $p$ and allow the cusp 
  form to vary within a family, possibly with weights.
  In this setting there are strikingly many different equidistribution results 
  for $\GL(2)$ in the literature.\footnote[2]{Some of these have been extended
  to groups of higher rank, e.g., \cite{Z}, \cite{BBR}, \cite{ShT}, \cite{MT}.
  There are also some hybrid results for $\GL(2)$ with both $p$ and 
  the conductor tending to $\infty$, \cite{N}, \cite{W}.}
  We summarize many of these in Table \ref{fig}, giving references
  for the precise statements in each case.
\begin{table}
[H]
\label{fig}
\begin{tabular}{l|c|c|l}
\rule[-2mm]{0mm}{7mm}
Family span& Weights & Measure & References\\
\hline
\rule[-4mm]{0mm}{8mm}
{$S_k(N)\atop{ N+k\to\infty}$} & 1 & 
  $\text{Plancherel:}\atop{\frac{p+1}{(p^{1/2}+p^{-1/2})^2-x^2}\mu_\infty}$
  &\cite{Serre}, \cite{CDF}, \cite{Li2} \\
\hline
\rule[-3mm]{0mm}{8mm}
{$L^2_0(\SL_2(\Z)\bs \mathbf{H})\atop{\lambda_j\le T^2, T\to\infty}$} 
 & 1 & 
  {Plancherel}
  &\cite{Sa} \\
 \hline
\rule[-3mm]{0mm}{8mm}
{$S_k(N)\atop{N+k\to\infty}$} & {$\frac{|a_f(r)|^2}{\|f\|^2}$} & Sato-Tate ($\mu_\infty$)
  &\cite{Li1}, \cite{pethil} \\
\hline
\rule[-3mm]{0mm}{8mm}
{$L^2_0(\Gamma_0(N)\bs \mathbf{H})\atop{N\to\infty}$} 
  & {$\frac{|a_{u_j}(r)|^2h(\lambda_j)}{\|u_j\|^2}$} & Sato-Tate
  &\cite{ftf} \\
\hline
\rule[-3mm]{0mm}{8mm}
{$L^2_0(\SL_2(\Z)\bs \mathbf{H})\atop{\lambda_j\le T^2, T\to\infty}$} 
  & {$\frac{|a_{u_j}(r)|^2}{\|u_j\|^2}$} & Sato-Tate
  &\cite{Br}, \cite{BBR}, \cite{BM} \\
\hline
\rule[-3mm]{0mm}{8mm}
{$S_k(N)^{\text{\new}}\atop{N\to\infty}$} &
\raisebox{0mm}{$\substack{
  \frac{\Lambda(\frac12,f\times\chi)\Lambda(\frac12,f)}{\|f\|^2}\\\\
  \text{for $\chi$ quadratic}}$}&
   $\frac{L_p(\frac12,x,\chi)L_p(\frac12,x)}{L_p(1,\chi)}\mu_\infty$
&
\raisebox{0mm}{$\substack{
\text{\normalsize \cite{RR}, \cite{FW}, \cite{ST},}\\\\
\text{\normalsize Cor. \ref{RRcor} below,}\\\\
 \text{(Also \cite{Su}, \cite{T} for Maass}
  \\ \text{forms of increasing level)}}$}\\
\hline
\rule[-3mm]{0mm}{8mm}
{$S_k(N)\atop{N\to\infty}$} & $\text{regular matrix summation}
  \atop{\text{involving }\tfrac1{\|f\|^2}}$
  & $\tfrac12(1-\frac{x^2}4)^{-1}\mu_\infty$
  & \cite{GMR} \\
\hline
\rule[-3mm]{0mm}{8mm}
{$S_k(N)\atop{N+k\to\infty}$} & 
{$\frac{|a_f(r)|^2\Lambda(s,f\times\chi)}{\|f\|^2}$} 
  & $L_p(s,x,\chi)\mu_\infty$
  & Theorem \ref{main} below \\
\end{tabular}
\caption{Various fixed-$p$ equidistribution results for Hecke eigenvalues
  on $\GL(2)$. (See \eqref{Lx} for the definition of $L_p(s,x,\chi)$.)}
\end{table}

  The last of these examples is new.  Theorem \ref{main} 
  is a generalized and quantitative version of the following.  
  Notation is defined precisely in Section 2.

\begin{theorem}\label{main1}
Let $\chi$ be a primitive real Dirichlet character of conductor $D\ge 1$
   coprime to $N$,
   let $p\nmid DN$ be a fixed prime, and let $\F_{N,k}$ be an orthogonal
  basis for the space $S_k(N)$ of cusp forms, consisting of eigenfunctions
  of the Hecke operator $T_p$, with first Fourier
  coefficient $1$.  Then assuming $N>1$ and $k>2$, the Hecke eigenvalues
  $\lambda_f(p)\in [-2,2]$ for $f\in \F_{N,k}$,
  when weighted by the central twisted $L$-values
  $w_f=\frac{\Lambda(\frac{1}2,f\times\chi)}{\|f\|^2}$,
  become equidistributed in $[-2,2]$ with respect to the probability measure
\[d\mu_p(x)= \frac{p}{(p+1)-x\chi(p)\sqrt p}\,d\mu_\infty(x)\]
  as $N+k\to\infty$.
\end{theorem}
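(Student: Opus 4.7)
The plan is to apply Weyl's criterion using the orthonormal basis
$\{U_m(x/2)\}_{m\ge 0}$ of Chebyshev polynomials of the second kind in
$L^2([-2,2],d\mu_\infty)$.  A preliminary calculation identifies the target
moments: the generating identity $\sum_{m\ge 0} U_m(\cos\theta)z^m =
(1-2z\cos\theta+z^2)^{-1}$ applied at $z = \chi(p)/\sqrt p$ gives
\[L_p(\tfrac12,x,\chi) = \frac{p}{(p+1)-x\chi(p)\sqrt p} =
    \sum_{m\ge 0}\chi(p)^m p^{-m/2}\, U_m(x/2),\]
so orthonormality against $\mu_\infty$ immediately yields
$\int_{-2}^{2} U_m(x/2)\,d\mu_p(x) = \chi(p)^m p^{-m/2}$ (in particular
$\mu_p$ is a probability measure).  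It therefore suffices to show, for each
fixed $m\ge 0$,
\[\frac{\sum_{f\in\F_{N,k}} w_f\,U_m(\lambda_f(p)/2)}
       {\sum_{f\in\F_{N,k}} w_f}
   \longrightarrow \chi(p)^m p^{-m/2} \quad\text{as } N+k\to\infty.\]

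Since each $f\in\F_{N,k}$ is a $T_p$-eigenform with $a_f(1)=1$ and $p\nmid N$,
the Hecke recursion at $p$ gives $U_m(\lambda_f(p)/2)=\lambda_f(p^m)$, the
arithmetically normalized $p^m$-th eigenvalue.  I would then apply the
Feigon--Whitehouse relative trace formula to expand, for any $n$ coprime to
$DN$,
\[S_{N,k}(n)\eqdef\sum_{f\in\F_{N,k}}\frac{\Lambda(\tfrac12,f\times\chi)\,
    \lambda_f(n)}{\|f\|^2} = M_{N,k}(n)+E_{N,k}(n),\]
where $M_{N,k}(n)$ is the identity contribution to the geometric side and
$E_{N,k}(n)$ collects the Kloosterman-type orbital integrals weighted by an
archimedean Bessel kernel.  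The structural feature to exploit is that
$M_{N,k}(n)=C_{N,k}\cdot\chi(n)\,n^{-1/2}$ for a constant $C_{N,k}$ independent
of $n$: the identity contribution reduces to a single matrix-entry integration
that evaluates $\chi$ on the Hecke correspondence $T_n$ to produce $\chi(n)$,
with the Satake normalization of $T_n$ producing the factor $n^{-1/2}$.  The
ratio $M_{N,k}(p^m)/M_{N,k}(1)=\chi(p)^m p^{-m/2}$ then matches the Weyl
target exactly.

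It remains to show $E_{N,k}(n)=o(C_{N,k})$ uniformly as $N+k\to\infty$.  When
$N\to\infty$, the Weil bound on the Kloosterman sums (whose moduli are
divisible by $N$) yields a saving of $N^{-1/2+\varepsilon}$.  When $k\to\infty$,
the rapid decay of $J_{k-1}$ at the small arguments that appear handles the
archimedean sum.  Taken together these cover both asymptotic regimes.

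The hardest step will be pinning down the multiplicative structure
$M_{N,k}(n)\propto\chi(n)/\sqrt n$ of the main term while simultaneously
controlling the error uniformly in both $N$ and $k$.  Concretely, the local
factors at $p$, at primes dividing $N$, and at primes dividing $D$ must all
be tracked through the relative trace formula so that the $p$-local ratio
produces exactly $\chi(p)^m p^{-m/2}$, and the two asymptotic regimes
($N\to\infty$ with $k$ fixed and $k\to\infty$ with $N$ fixed) must be
accommodated by a single bound on $E_{N,k}$ that remains effective in either
limit.  Once this is done, Weyl's criterion completes the proof.
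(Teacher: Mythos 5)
Your strategy is exactly the paper's: apply Weyl's criterion in the basis of Chebyshev polynomials $U_m(x/2)$ (the paper writes these as $X_m(x)$), use the generating-function identity to recognize the target moments $\chi(p)^m p^{-m/2}$, reinterpret $U_m(\lambda_f(p)/2)=\lambda_f(p^m)$ via the Hecke relations, and then extract a main term proportional to $\chi(n)n^{-1/2}$ from a relative trace formula for the twisted first moment, controlling the error uniformly in the two asymptotic regimes $N\to\infty$ and $k\to\infty$. Your identification of the measure and its moments is correct, and your heuristic for why the diagonal contribution produces $\chi(n)n^{-1/2}$ matches what actually happens (the paper's main term in \eqref{JKtf} is $\propto n^{(k-1)/2-s}\chi(n)$; after renormalizing $a_f(n)\to\lambda_f(n)$ and setting $s=\tfrac12$, this becomes $\chi(n)n^{-1/2}$).

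The one genuine gap is the choice of trace formula. You propose to apply the Feigon--Whitehouse relative trace formula to evaluate $\sum_f \Lambda(\tfrac12,f\times\chi)\lambda_f(n)/\|f\|^2$, but Feigon--Whitehouse (building on Ramakrishnan--Rogawski) computes averages of the \emph{product} $\Lambda(\tfrac12,f\times\chi)\Lambda(\tfrac12,f)$, i.e.\ a quadratic base-change central value arising from a torus period. There is no way to separate out the single twisted $L$-value from that formula, so your plan as stated cannot be carried through with FW; that tool is what the paper uses instead for Theorem \ref{llz2} and Proposition \ref{RR}, where the weight genuinely is the two-$L$-value product. For the single twisted first moment needed here, the paper invokes the twisted Petersson-type relative trace formula of Jackson--Knightly \cite{twists} (a twisted version of \cite{petrr}), stated as \eqref{JKtf}. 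Interestingly, your structural description of the geometric side --- a diagonal main term plus Kloosterman-type sums weighted by a Bessel kernel --- actually fits the Jackson--Knightly/Petersson framework rather than FW, so it seems you reached for the wrong citation while correctly intuiting the right mechanism. Once that substitution is made, the remaining work (the explicit bound \eqref{El} and its uniformity in $N$ and $k$, which you rightly flag as the hardest step) is exactly what \cite{twists} supplies, and the argument closes as in the paper's Proposition \ref{Xl} and Theorem \ref{main}.
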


  We emphasize that $\chi$ is allowed to be trivial. 
  In the generalized version (Theorem \ref{main}), 
  $\chi$ need not be real, and we do not specialize the $L$-value in $w_f$ to $s=\frac12$.

There is a natural interpretation of the measure appearing in the above 
  theorem.  See the remark after Theorem \ref{main}.
  Interestingly, the measure is not symmetric, though as expected
  it converges to the Sato-Tate measure as $p\to \infty$. It 
  is plotted below in the case $p=5$ when $\chi(5)=1$:

\centerline{\includegraphics[height=2cm]{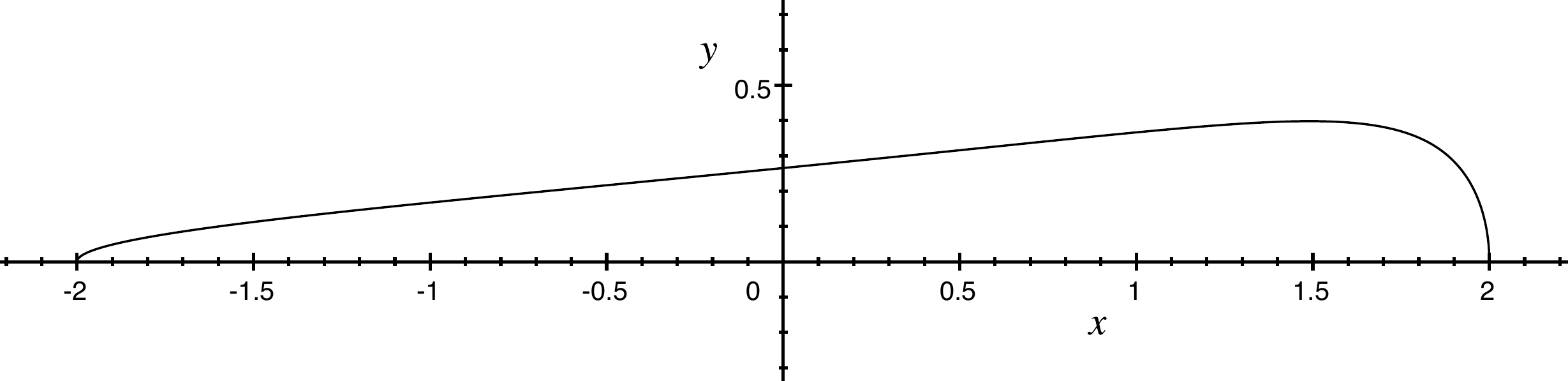}}

\noindent If $\chi(p)=-1$, there is an analogous negative bias.

We also give, in Corollary \ref{RRcor}, another result of this nature, namely
  that for newforms $f\in S_k(N)$ with $N$ prime,
 the $\lambda_f(p)$, when weighted as in Theorem \ref{llz2},
  become equidistributed in the limit as $N\to\infty$
   relative to the measure $\mt$ given in the sixth row of the above table.
This is essentially the main result of \cite{RR}.
  We obtain a more general statement by keeping track of
  the dependence on $k$ in their calculations.
   The measure $\mt$ depends on $\chi$.  It 
  exhibits a similar positive bias precisely when $\chi(p)=1$.
When $\chi(p)=-1$, it coincides with the Plancherel measure, which is even.

\section{Preliminaries on modular forms}

Fix a Dirichlet character $\psi$
   modulo $N$, and let $S_k(N,\psi)$ be the space of holomorphic cusp forms 
  $f$ on the complex upper half-plane $\mathbf{H}$ that transform under
  the action of
  $\Gamma_0(N)=\{\smat abcd\in \SL_2(\Z)|\, c\in N\Z\}$ according to
\[f(\frac{az+b}{cz+d})={\psi(d)}(cz+d)^kf(z).\]
We normalize the Petersson inner product on $S_k(N,\psi)$ by
\begin{equation}\label{petnorm}
\|f\|^2=\frac1{\nu(N)}\iint_{\Gamma_0(N)\bs \mathbf H}|f(z)|^2y^k\frac{dx\,dy}{y^2},
\end{equation}
where
\[\nu(N)=[\SL_2(\Z):\Gamma_0(N)].\]

For us, a {\em Hecke eigenform} is a simultaneous eigenfunction of the 
  Hecke operators 
\[T_n f(z) = n^{k-1}\sum_{ad=n,\atop{a>0}}\sum_{b=0}^{d-1}\psi(a)d^{-k}f(\frac{az+b}d)\]
  for $(n,N)=1$, normalized to have first Fourier coefficient $1$.
  Given a Hecke eigenform 
\[f(z)=\sum_{n>0}a_f(n)q^n \qquad(q=e^{2\pi i z}),\]
 for a prime $p\nmid N$ we fix a complex square root $\psi(p)^{1/2}$ 
and define the normalized $p$-power Hecke eigenvalue
\begin{equation}\label{he}
\lambda_f({p^\ell})= \frac{a_f({p^\ell})}{\psi(p)^{\ell/2}p^{\ell(k-1)/2}}
   \quad(\ell\ge0).
\end{equation}
  By Deligne's theorem $\lambda_f(p)\in [-2,2]$, and our interest is in the distribution
  of these numbers as $f$ varies.  For any integer $\ell\ge 0$, 
\[\lambda_f({p^\ell}) = X_\ell(\lambda_f(p)),\]
  where $X_\ell$ is the Chebyshev polynomial of degree $\ell$ defined by
  $X_\ell(2\cos\theta)=\frac{\sin((\ell+1)\theta)}{\sin\theta}$
  (see, e.g., \cite[Prop. 29.8]{KL}, where $\w'$ corresponds to $\psi^{-1}$).
    Equivalently,
\begin{equation}\label{Cheb}
a_f({p^\ell}) = \psi(p)^{\ell/2}p^{\ell(k-1)/2}X_\ell(\lambda_f(p)).
\end{equation}

Fix an integer $D$ with $(D,N)=1$, and let $\chi$ be a primitive Dirichlet
  character modulo $D$.
The $\chi$-twisted $L$-function of $f$ is given for $\Re(s)>1$
   by the Dirichlet series
\[L(s,f\times\chi)=\sum_{n>0}\frac{\chi(n)a_f(n)}{n^{s+\frac{k-1}2}}.\]
The completed $L$-function
\begin{equation}\label{L}
\Lambda(s,f\times\chi)=(2\pi)^{-s-\frac{k-1}2}\Gamma(s+\tfrac{k-1}2)L(s,f\times \chi)
\end{equation}
has an analytic continuation to the complex plane and 
  satisfies a functional equation relating $s$ to $1-s$, which takes the form
\begin{equation}\label{fe}
\Lambda(s,f\times \chi)=\frac{i^k}{D^{2s-1}}
  \frac{\tau(\chi)^2}D\Lambda(1-s,f\times \ol{\chi})
\end{equation}
when $N=1$.  Here, $\tau(\chi)=\sum_{m=1}^D\chi(m)e^{2\pi i m/D}$ is
 the Gauss sum attached to $\chi$.

Given $x\in [-2,2]$ and $p\nmid DN$, there is a unique unramified unitary representation
  $\pi_{x,p}$ of $\GL_2(\Q_p)$ with Satake parameters $\alpha_p,\beta_p$
  satisfying $\alpha_p+\beta_p=x\psi(p)^{1/2}$ and $\alpha_p\beta_p=\psi(p)$.
   We denote its twisted $L$-factor by
\begin{equation}\label{Lx}
L_p(s,x,\chi)=
(1-x\psi(p)^{1/2}\chi(p)p^{-s}+\psi(p)\chi(p)^2p^{-2s})^{-1}.
\end{equation}
With this notation, the local $L$-factor of $L(s,f\times\chi)$ is
\[L_p(s,f\times\chi)=L_p(s,\lambda_f(p),\chi).\]

\section{Weighted equidistribution of Hecke eigenvalues I}\label{eigen}

Fix a weight $k>2$ and a level $N>1$, and let 
\[\mathcal{F}=\F_{N,k}=\mathcal{F}_k(N,\psi) \]
  be an orthogonal basis for $S_k(N,\psi)$ consisting of Hecke eigenforms.
Fix $D$ and $\chi$ as above, and fix an integer $r$ relatively prime to $D$.
  In this section, we do not assume that $\chi^2=1$ unless explicitly stated.
For each $f\in\F$, define the (complex) weight
\begin{equation}\label{wh}
w_f =\frac{\ol{a_f(r)}\Lambda(s,f\times \chi)}{\|f\|^2}.
\end{equation}
  Then for all $s=\sigma+i\tau$ in the strip $1-\tfrac{k-1}2<\sigma<\tfrac{k-1}2$
  and all integers $n$ relatively prime to $DN$,
by Theorem 1.1 of \cite{twists} (which is a twisted version of the main theorem
  of \cite{petrr}), we have
\begin{align}\label{JKtf}
\frac1{\nu(N)}
\sum_{f\in\mathcal{F}}
  w_f a_f({n})
 =&\frac{2^{k-1}(2\pi rn)^{\frac{k-1}2-s}}{(k-2)!}\Gamma(s+\tfrac{k-1}2)
  \sum_{d|(n,r)} d^{2s}\psi(\tfrac nd)\chi(\tfrac{rn}{d^2})\\
\notag &+O\left(\gcd(n,r)\frac{(4\pi rn)^{k-1}D^{\frac{k}2-\sigma}\varphi(D)}
{N^{\sigma+\frac{k-1}2}(k-2)!}\right).
\end{align}
(We have adjusted for the fact that in \cite{twists} the $L$-function is 
  normalized to have central point $\tfrac k2$, whereas here the 
  central point is $\tfrac12$.)
  The implied constant is explicit in \cite{twists}, and depends only on $s$.

  Now fix a prime $p\nmid rND$.
  Taking $n=p^\ell$ and substituting \eqref{Cheb}, the above becomes
\begin{equation}\label{ellsum}
\frac1{\nu(N)}
\sum_{f\in\mathcal{F}} w_f X_\ell(\lambda_f(p)) = F_\ell + E_\ell,
\end{equation}
where
 \begin{equation}\label{Fell}
F_\ell=\left(\psi(p)^{1/2}\chi(p)p^{- s}\right)^{\!\ell}
  \,\frac{2^{k-1}(2\pi r)^{\frac{k-1}2-s}\chi(r)}{(k-2)!}
  \Gamma(s+\tfrac{k-1}2),
\end{equation}
and $E_\ell$ is an error term satisfying
\begin{equation}\label{El}
E_\ell \ll 
p^{\frac{\ell(k-1)}2}\frac{(4\pi r)^{k-1}D^{\frac{k}2-\sigma}\varphi(D)}{N^{\sigma+\frac{k-1}2}(k-2)!}.
\end{equation}

\begin{proposition}\label{Xl}
For any $\ell\ge 0$ and $0<\sigma<1$,
\begin{equation}\label{lim}
\frac{\sum_{f\in\mathcal{F}_k(N,\psi)} w_f X_\ell(\lambda_f(p))}
{\sum_{f\in\mathcal{F}_k(N,\psi)} w_f } 
  =\left[\psi(p)^{1/2} \chi(p)p^{-s}\right]^\ell
+O\left( \frac{p^{\frac{\ell (k-1)}2}(4\pi rDe)^{k/2}}{N^{\frac{k-1}2}k^{\frac{k}2-1}}\right),
\end{equation}
where the implied constant depends only on $r,s,D$.
\end{proposition}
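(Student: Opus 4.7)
I would apply the trace-formula identity \eqref{ellsum} to both the numerator and the denominator of \eqref{lim}. Specializing \eqref{ellsum} to $\ell=0$ and using $X_0\equiv 1$ gives
\[\frac1{\nu(N)}\sum_{f\in\F} w_f = F_0+E_0,\]
where $F_0 = \tfrac{2^{k-1}(2\pi r)^{(k-1)/2-s}\chi(r)\Gamma(s+(k-1)/2)}{(k-2)!}$ is non-zero because $\chi(r)\neq 0$ (as $(r,D)=1$) and $\sigma>0$. A direct comparison of \eqref{El} for $\ell=0$ against a Stirling lower bound on $|\Gamma(s+(k-1)/2)|$ shows $|E_0|/|F_0|\to 0$ as $N+k\to\infty$, so the denominator on the left of \eqref{lim} is eventually non-zero and the ratio makes sense.

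Next I would decompose
\[\frac{F_\ell+E_\ell}{F_0+E_0} \;=\; \frac{F_\ell}{F_0} \;+\; \frac{E_\ell}{F_0+E_0} \;-\; \frac{F_\ell}{F_0}\cdot\frac{E_0}{F_0+E_0}.\]
From \eqref{Fell}, $F_\ell/F_0 = [\psi(p)^{1/2}\chi(p)p^{-s}]^\ell$, which supplies the main term in \eqref{lim}. Since $|\psi(p)^{1/2}\chi(p)p^{-s}|=p^{-\sigma}\le 1$ and since $|E_0|\le |E_\ell|$ (the bound in \eqref{El} grows with $\ell$), both remaining terms are controlled by $O(|E_\ell|/|F_0|)$, so the problem reduces to bounding this single ratio.

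Finally, substituting \eqref{Fell} and \eqref{El} and cancelling common factors yields
\[\frac{|E_\ell|}{|F_0|}\;\ll\; p^{\ell(k-1)/2}\,\frac{(2\pi r)^{(k-1)/2+\sigma}\,D^{k/2-\sigma}\varphi(D)}{N^{(k-1)/2+\sigma}\,|\Gamma(s+(k-1)/2)|}.\]
I would then invoke Stirling in the form $|\Gamma(s+(k-1)/2)|^{-1}\ll_s \sqrt{k}(2e/(k-1))^{(k-1)/2}$ to merge $(2\pi rD)^{(k-1)/2}$ with $(2e)^{(k-1)/2}$ into $(4\pi rDe)^{(k-1)/2}$ and to convert the $(k-1)$-power in the denominator to one in $k$. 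The leftover factors $D^{1/2-\sigma}(2\pi r)^\sigma N^{-\sigma}\varphi(D)\,(k/(k-1))^{(k-1)/2}$ are all $O_{r,s,D}(1)$, and the identity $(4\pi rDe)^{k/2}/k^{k/2-1} = \sqrt{4\pi rDe}\cdot\sqrt{k}\cdot(4\pi rDe/k)^{(k-1)/2}$ (with $\sqrt{4\pi rDe}$ absorbed into the implied constant) produces exactly the error term in \eqref{lim}. The main obstacle is this Stirling bookkeeping: because the implied constant in \eqref{lim} is required to depend only on $r,s,D$, every factor of $k$ must be exhibited explicitly in the form $(4\pi rDe)^{k/2}/k^{k/2-1}$ rather than hidden in the constant.
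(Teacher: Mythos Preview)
Your proposal is correct and follows essentially the same route as the paper: the identical algebraic decomposition $\frac{F_\ell+E_\ell}{F_0+E_0}=\frac{F_\ell}{F_0}+\frac{E_\ell-(F_\ell/F_0)E_0}{F_0+E_0}$, the observation $F_\ell/F_0=[\psi(p)^{1/2}\chi(p)p^{-s}]^\ell$, and the reduction to bounding $C_0/|F_0|$. The only difference is that the paper outsources the Stirling bookkeeping by citing \cite[\S9]{twists} for the bound $\tfrac{E_0}{F_0}\ll\tfrac{C_0}{F_0}\ll \tfrac{(4\pi rDe)^{k/2}}{N^{(k-1)/2}k^{k/2-1}}$, whereas you carry out that computation explicitly; one small imprecision is that ``$|E_0|\le|E_\ell|$'' should read ``the bound \eqref{El} for $E_0$ is dominated by that for $E_\ell$,'' but this does not affect the argument.
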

\begin{remarks}\label{Xlrem}
    (1)
  When $N>1$, it is shown in \cite[\S9]{twists} that the sum of the weights
  is nonzero when $N+k$ is sufficiently large. 
  When $N=1$, this can only be verified under
  certain extra conditions mentioned in Theorem \ref{main} below.

(2) By taking $n=rp^\ell$ in \eqref{JKtf} rather than $n=p^\ell$,
   and using $a_f(rp^\ell)=a_f(r)a_f(p^\ell)$,
  one obtains \eqref{ellsum} with the different weight 
  \begin{equation}\label{wf2}
w_f=\frac{\Lambda(s,f\times \chi)|a_f(r)|^2}{\|f\|^2}.
\end{equation}
 In \eqref{Fell}
  we then have to replace $(2\pi r)$ by $(2\pi r^2)$, and $\chi(r)$ by $\sum_{d|r}d^{2s}\psi(\tfrac rd)
  \chi(\tfrac{r^2}{d^2})$; in \eqref{El} $r^2$ replaces $r$, and
  one additional factor of $r$ is needed due to $\gcd(n,r)=r$.
  As long as the above sum over $d$ is nonzero
(for example, if $\chi^2$ and $\psi$ are trivial and $s$ is real), \eqref{lim} 
  holds with the alternative weight upon replacing $r$ by $r^2$ in the error term.
\end{remarks}

\begin{proof}[Proof of Proposition \ref{Xl}]
In the notation of \eqref{ellsum}, the left-hand side of \eqref{lim} is
\[\frac{F_\ell+E_\ell}{F_0+E_0} = \frac{F_\ell}{F_0} 
  +\frac{E_\ell -\frac{F_\ell}{F_0} E_0}{F_0+E_0}.\]
This will immediately give \eqref{lim} once we show that the second term on the
  right-hand side has the desired rate of decay.  If we denote the right-hand side of
  \eqref{El} by $p^{\frac{\ell(k-1)}2}C_0$, then
\[  \frac{E_\ell -\frac{F_\ell}{F_0} E_0}{F_0+E_0}
=\frac{E_\ell -\psi(p)^{\ell/2}\chi(p)^\ell p^{-\ell s}E_0}{F_0+E_0}
\ll\frac{(p^{\frac{\ell(k-1)}2} +p^{-\ell \sigma})C_0}{F_0+E_0}\]
\[\ll
\frac{p^{\frac{\ell(k-1)}2}C_0}{F_0+E_0}
  =p^{\frac{\ell(k-1)}2}\frac{\frac{C_0}{F_0}}{1+\frac{E_0}{F_0}}.\]
In \S9 of \cite{twists} (taking $n=1$), it is shown that
\[\frac{E_0}{F_0}\ll\frac{C_0}{F_0}\ll \frac{(4\pi rDe)^{k/2}}{N^{(k-1)/2}k^{{k}/2-1}},\]
where the implied constant depends on $r,s,D.$  The proposition follows.
\end{proof}

Define a measure
\begin{equation}\label{mu1}
d\mu_{p,s,\chi}(x)=\sum_{\ell=0}^\infty 
  \left[\psi(p)^{1/2} \chi(p)p^{-s}\right]^\ell X_\ell(x)\, d\mu_\infty(x),
\end{equation}
where as before, $\mu_\infty$ is the Sato-Tate measure on $\R$ with support
  $[-2,2]$, and $X_\ell$ is the Chebyshev polynomial.
  The infinite series is absolutely convergent provided $|x|\le 2$ and 
  $\Re(s)>0$.  Indeed, if $|x|\le 2$ and $|t|<1$, we have
  the well-known identity
\begin{equation}\label{Xlid}
\sum_{\ell=0}^\infty t^\ell X_\ell(x) = \frac1{1-xt+t^2}.
\end{equation} 
Therefore
\[d\mu_{p,s,\chi}(x)
= \frac1{1-x\psi(p)^{1/2}\chi(p)p^{-s}+\psi(p)\chi(p)^2p^{-2s}}d\mu_\infty(x).\]
As pointed out to us by Fan Zhou, this gives (in the notation of \eqref{Lx})
\begin{equation}\label{mu2}
d\mu_{p,s,\chi}(x)=L_p(s,x,\chi)d\mu_\infty(x).
\end{equation}
The above is a complex-valued probability measure since, by
  \eqref{mu1} and the orthonormality of the $X_\ell(x)$ relative to $\mu_\infty$,
  $\int X_0(x) d\mu_{p,s,\chi}=1.$ 
We note that when $s=\tfrac12$,  $\psi$ is trivial, $\psi(p)^{1/2}$ is chosen to be $1$,
   and $\chi$ is real,
\[d\mu_{p,\frac12,\chi}(x)=\frac{p}{(p+1)-x\chi(p)\sqrt{p}}\,d\mu_\infty(x)\]
is the measure given in Theorem \ref{main1}.

\begin{theorem}\label{main}
Fix $s$ in the critical strip $0<\Re(s)<1$, let $N>1$ be coprime to $rD$, let $k>2$, 
  let $\psi$ be a Dirichlet character whose conductor divides $N$, fix a prime
  $p\nmid rND$, and a choice of square root $\psi(p)^{1/2}$.  Define weights $w_f$
  as in \eqref{wh} and Hecke eigenvalues $\lambda_f(p)$ as in \eqref{he}.
Then the $\lambda_f(p)$ for $f\in \F_k(N,\psi)$ become
  $w_f$-equidistributed in $[-2,2]$ relative to the measure $\mu_{p,s,\chi}$
  as $N+k\to\infty$.  In other words, for any continuous function $\phi$ 
  on $\R$,
\begin{equation}\label{ed}
\lim_{N+k\to\infty}
\frac{\sum_{f\in\mathcal{F}_k(N,\psi)} w_f \phi(\lambda_f(p))}
{\sum_{f\in\mathcal{F}_k(N,\psi)} w_f } =\int_\R \phi \,d\mu_{p,s,\chi}.
\end{equation}
Moreover, if $\phi$ is a polynomial of degree $d$, then
\begin{equation}\label{ed2}
\frac{\sum_{f\in\mathcal{F}_k(N,\psi)} w_f \phi(\lambda_f(p))}
{\sum_{f\in\mathcal{F}_k(N,\psi)} w_f } =\int_\R \phi \,d\mu_{p,s,\chi}
+ O\left( \frac{p^{\frac{d(k-1)}2}(4\pi rDe)^{k/2}}
  {N^{\frac{k-1}2}k^{\frac{k}2-1}}\|\phi\|_{ST}\right),
\end{equation}
where $\|\phi\|_{ST}$ is the norm of $\phi$ in $L^2(\R,\mu_\infty)$.

When $N=1$, the equidistribution assertion \eqref{ed} still holds, 
  provided $\chi^2=1$,  $s=\tfrac12$ and
  $\frac{i^k\tau(\chi)^2}D\neq -1$.

Lastly, if $\chi$ is quadratic, $\psi$ is trivial, and $s$ is real,
  all of the above statements hold if instead we use the
  nonnegative weights given in \eqref{w1} and we replace $r$ by
 $r^2$ in the error term of \eqref{ed2}.
\end{theorem}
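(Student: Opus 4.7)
The plan is to reduce the quantitative statement \eqref{ed2} to Proposition \ref{Xl} via the Chebyshev basis, then deduce \eqref{ed} by polynomial approximation, and finally adapt both to the $N=1$ case and the nonnegative-weight variant. For \eqref{ed2} with $\phi$ a polynomial of degree $d$, I would write $\phi(x) = \sum_{\ell=0}^d c_\ell X_\ell(x)$ in the Chebyshev basis. By orthonormality of $\{X_\ell\}$ with respect to $\mu_\infty$, $\|\phi\|_{ST}^2 = \sum_\ell |c_\ell|^2$ and hence $\sum_\ell |c_\ell| \le (d+1)^{1/2}\|\phi\|_{ST}$ by Cauchy--Schwarz. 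Also by orthonormality and \eqref{mu1}, $\int_\R X_\ell\, d\mu_{p,s,\chi} = [\psi(p)^{1/2}\chi(p)p^{-s}]^\ell$, so summing Proposition \ref{Xl} against the $c_\ell$ reassembles the main term into $\int_\R \phi\, d\mu_{p,s,\chi}$, with total error dominated by the $\ell=d$ contribution since the bound in Proposition \ref{Xl} grows in $\ell$ only through the factor $p^{\ell(k-1)/2}$.

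Next, the continuous statement \eqref{ed} exploits that $\lambda_f(p)\in[-2,2]$ by Deligne and $\operatorname{supp}\mu_{p,s,\chi}\subset[-2,2]$. Given $\varepsilon>0$, Weierstrass approximation produces a polynomial $P$ with $\sup_{[-2,2]}|\phi-P|<\varepsilon$. For nonnegative weights one applies \eqref{ed2} to $P$ and absorbs the $\phi-P$ discrepancy directly via the triangle inequality, since $\sum|w_f|/|\sum w_f|=1$. For the complex weights of \eqref{wh}, where that ratio need not be bounded, one instead truncates the $L^2(\mu_\infty)$-Chebyshev expansion of $\phi$ at a degree $d=d(N,k)$ chosen large enough for $\|\phi-P\|_{ST}\to 0$ but small enough that the error of \eqref{ed2}, growing like $p^{d(k-1)/2}$, still decays against the factor $k^{-k/2}$ supplied by Proposition \ref{Xl}.

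For the $N=1$ case, the same scheme applies once $\sum_f w_f$ is shown to be eventually nonzero. Taking $n=1$ in \eqref{JKtf} yields $\frac{1}{\nu(1)}\sum_f w_f = F_0+E_0$. With $\chi=\overline{\chi}$ and $s=\tfrac12$, the functional equation \eqref{fe} forces $\Lambda(\tfrac12, f\times\chi)=0$ for every $f$ precisely when $i^k\tau(\chi)^2/D=-1$; the complementary hypothesis removes this obstruction, and $F_0\neq 0$ because $\chi(r)\neq 0$. The proof of Proposition \ref{Xl} then shows $E_0/F_0\to 0$ as $k\to\infty$, so the previous two paragraphs carry over. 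The nonnegative-weight assertion follows from Remark \ref{Xlrem}(2): when $\chi^2$ and $\psi$ are trivial and $s$ is real, the coefficient $\sum_{d\mid r}d^{2s}\psi(r/d)\chi(r^2/d^2)$ reduces to the strictly positive divisor sum $\sum_{d\mid r}d^{2s}$, so the analogue of $F_0$ is again nonzero; one replaces $r$ by $r^2$ in the error and repeats.

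The hardest step will be the passage from polynomials to continuous $\phi$ when the weights are complex, since the naive Weierstrass-plus-triangle-inequality argument can fail due to cancellation in $\sum w_f$ with no positivity available. The remedy outlined above requires calibrating the Chebyshev truncation degree against the geometric growth of the error bound in Proposition \ref{Xl}; this is the only step beyond routine bookkeeping once the Chebyshev moment computation of Proposition \ref{Xl} is in hand.
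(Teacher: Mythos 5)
Your derivation of \eqref{ed2} is structurally the same as the paper's: expand $\phi$ in the orthonormal Chebyshev basis, apply Proposition~\ref{Xl} term by term, and reassemble the main term via \eqref{mu1}. One small inefficiency: you bound $\sum_\ell|c_\ell|$ by $(d+1)^{1/2}\|\phi\|_{ST}$ and then majorize every error by the $\ell=d$ one, which yields an extraneous $\sqrt{d+1}$ not present in \eqref{ed2}. The paper instead uses the per-coefficient bound $|\langle\phi,X_\ell\rangle|\le\|\phi\|_{ST}$ and sums the geometric series $\sum_{\ell\le d}p^{\ell(k-1)/2}\ll p^{d(k-1)/2}$, giving exactly the stated error.

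The more substantive issue is your treatment of the passage from polynomials to general continuous $\phi$. You are right to flag that, for complex weights, the naive Weierstrass-plus-triangle-inequality step needs a uniform bound on $\sum|w_f|/|\sum w_f|$ --- a point the paper's one-line ``by Weierstrass approximation'' does not address. But your proposed remedy does not close this gap. The quantity $\E_{N,k}(\phi)-\E_{N,k}(P_d)=\frac{\sum_f w_f(\phi-P_d)(\lambda_f(p))}{\sum_f w_f}$ is controlled by $\|\phi-P_d\|_{\infty}$ on $[-2,2]$ (together with the same ratio $\sum|w_f|/|\sum w_f|$), \emph{not} by $\|\phi-P_d\|_{ST}$, because it involves evaluating $\phi-P_d$ at the finitely many discrete points $\lambda_f(p)$. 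For a general continuous function the Chebyshev partial sums need not converge uniformly, so choosing $d=d(N,k)$ large so that $\|\phi-P_d\|_{ST}\to 0$ does not control the pointwise discrepancy. Your fix thus replaces one unjustified step with another. (This does not affect the applications in \S\ref{llz1}--\ref{II}, where only the quantitative \eqref{ed2} with polynomial $\phi$ is used, but the general continuous case in \eqref{ed} is not secured by your argument.)

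Finally, for $N=1$ you argue from the functional equation that the condition $i^k\tau(\chi)^2/D\neq-1$ prevents $\Lambda(\tfrac12,f\times\chi)$ from being forced to vanish, so $F_0\neq 0$. This captures the right heuristic but omits a necessary computation: when $N=1$, \eqref{JKtf} acquires an \emph{extra main term} (from \cite[Theorem 1.1]{twists}) carrying the factor $\overline{\chi(rp^\ell)}$, and it is precisely the hypothesis $\chi^2=1$ that lets this combine with the first term to give $F_\ell=(\chi(p)p^{-1/2})^\ell\cdot(\cdots)\bigl[1+i^k\tau(\chi)^2/D\bigr]$. Without tracking that term you do not actually establish the form of $F_\ell$ --- and hence the measure --- in the $N=1$ case; the functional-equation observation alone is not a substitute.
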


\begin{remark}
  The measure $\mu_{p,s,\chi}$ appearing here is natural for the following reason.
  The weight $w_f$ depends directly on $\lambda_f(p)$ via the
  local $L$-factor $L_p(s,f\times\chi)=L_p(s,\lambda_f(p),\chi)$ (in the
  notation of \eqref{Lx}).  Assuming the remaining $L$-factors do not affect
  the distribution of the $\lambda_f(p)$, on the left-hand side of \eqref{ed}
  we have something resembling a Petersson-weighted average
 of the function $L_p(s,x,\chi)\phi(x)$ at the points $\lambda_f(p)$, which, in view
  of the equidistribution result \cite{Li1}, tends to the integral of this function
  against the Sato-Tate measure. By \eqref{mu2}, this is exactly what appears on the 
  right-hand side of \eqref{ed}.
\end{remark}

\begin{proof}
First take $N>1$.  By the fact that the Chebyshev polynomials are orthonormal 
  relative to the Sato-Tate measure $\mu_\infty$, we see from \eqref{mu1}
   that \eqref{lim} gives \eqref{ed} with $\phi=X_\ell$ for $\ell\ge 0$.
    By linearity it holds if $\phi$ is any polynomial, so
  by Weierstrass approximation, \eqref{ed} holds for all continuous functions.

Since $\|X_\ell\|_{ST}=1$ for all $\ell$, Proposition \ref{Xl} gives
  \eqref{ed2} when $\phi=X_\ell$.  For an arbitrary polynomial $\phi$ of degree $d$,
  we may write $\phi=\sum_{\ell=0}^d \sg{\phi,X_\ell}X_\ell$, so denoting the
  left-hand side of \eqref{ed2} by $\E(\phi)$, we have
\[\left|\E(\phi)-\int\phi \,d\mu_{p,s,\chi}\right|=\left|\sum_{\ell=0}^d
   \sg{\phi,X_\ell}\left(\E(X_\ell)-\int X_\ell \,d\mu_{p,s,\chi}\right)\right|\]
  Applying \eqref{mu1}, \eqref{lim}, and the Schwarz inequality 
  $|\sg{\phi,X_\ell}|\le \|\phi\|_{ST}$,
  the above is
\[\ll \|\phi\|_{ST}\frac{(4\pi rDe)^{k/2}}{N^{\frac{k-1}2}k^{\frac k2-1}}
  \sum_{\ell=0}^dp^{\frac{\ell(k-1)}2},\]
and \eqref{ed2} follows.

Now suppose $N=1$, $\chi^2=1$, and $s=\frac 12$.  Then there is an extra main term in
  \cite[Theorem 1.1]{twists}, so that in place of \eqref{Fell}, we have
\[F_\ell = (\chi(p)p^{-1/2})^\ell\,\frac{2^{k-1}(2\pi r)^{\frac{k}2-1}\chi(r)}{(k-2)!}
  \Gamma(\tfrac{k}2)\left[1+i^k\frac{\tau(\chi)^2}{D}\right].
\]
(The extra main term contains the factor $\ol{\chi(r p^\ell)}$, 
  so we we have imposed $\chi^2=1$ to make this equal to $\chi(p)^\ell\chi(r)$.)
The rest of the argument then goes through as above,
   provided the bracketed expression is nonzero.

Finally, if the alternative nonnegative weights \eqref{w1} are used, then
    in view of Remark \ref{Xlrem}(2), everything goes through as above.
\end{proof}

\section{Low-lying zeros I}\label{llz1}

In this section we derive Theorem \ref{llz} from the results of the previous section
   by standard methods (see, for example, \cite[\S9]{Ko}).
We will use Proposition \ref{Xl}, together with 
the following consequence of the explicit formula for the $L$-function
  of a holomorphic newform $f\in \mathcal{F}_k(N)^{\new}$ with analytic conductor $Q_f=k^2N$:
\begin{align}\label{D}\D(f,\phi)=&\widehat{\phi}(0)+\frac12\phi(0) 
-2\sum_{p\nmid N} \frac{\lambda_f(p)\log p}{p^{1/2}\log Q_f}
  \widehat{\phi}\Bigl(\frac{\log p}{\log Q_f}\Bigr)\\
\notag & -2\sum_{p\nmid N} \frac{\lambda_f(p^2)\log p}{p\log Q_f}
  \widehat{\phi}\Bigl(\frac{2\log p}{\log Q_f}\Bigr)
+O\Bigl(\frac{\log\log 3N}{\log Q_f}\Bigr).
\end{align}
This holds for any even Schwartz function $\phi$ on $\R$ whose Fourier transform
  has compact support, \cite[Lemma 4.1]{ILS}.

For the remainder of this section, $\chi$ is a real Dirichlet character, and
  $\mathcal{F}$ denotes one of the following families given in Theorem \ref{llz}:
\begin{enumerate}
\item $\F=\mathcal{F}_k(1)$, the set of Hecke eigenforms of
  level $N=1$ and even weight $k$ chosen so that $\frac{i^k\tau(\chi)^2}D\neq -1$.
\item $\F=\mathcal{F}_k(N)^{\new}$, where $N\nmid rD$ is prime.  In this case, 
  the even weight $k\ge 4$ is chosen so that $\frac{i^k\tau(\chi)^2}D = -1$.
\end{enumerate}

We need to consider the weighted average of $\D(f,\phi)$ 
  over $\mathcal{F}$.
 To simplify notation, given a function $A:f\mapsto A_f$ on $\mathcal{F}$,
  we define the $w$-weighted average of $A$ by
\[\E_{\mathcal{F}}^w(A)=\frac{\sum_{f\in \F}A_fw_f}{\sum_{f\in\F}w_f},\]
where, for all $f$ we take $w_f$ to be either the weight defined in \eqref{wh} with
   $s=\tfrac12$, or the weight defined in \eqref{w1}.
  In the latter case, Remark \ref{Xlrem}(2) should be borne in mind
  for the remainder of this section.

  When $N=1$
  and $\frac{i^k\tau(\chi)^2}D=-1$, or equivalently, $\chi(-1)=-i^k$,
 the functional equation \eqref{fe}
  forces $\Lambda(\tfrac12,f\times\chi)=0$ since $\chi$ is real.
  Hence when $N$ is prime, the conditions imposed on 
  $k$ and $\chi$ ensure that $w_f=0$ for all Hecke eigenforms $f$ of level $1$ and weight $k$.
    If we set $f_N(z)=f(Nz)$ for such $f$, we have
  $\Lambda(\tfrac12,f_N\times \chi)=\frac{\chi(N)}{N^{{k}/2}}\Lambda(\frac12,f\times\chi)=0$
  as well, so that $w_h=0$ for all $h$ in the span of $\{f,f_N\}$.
  Therefore
\begin{equation}\label{FF}
\E_{\mathcal{F}}^w(A)=\E_{\F_k(N)}^w(A)
\end{equation}
in this case, i.e., the value is unaffected if we average over an orthogonal basis for
  the full space $S_k(N)$, rather than restricting to newforms.

Since $Q_f=k^2N$ is constant across $\mathcal{F}$, we denote it by $Q$ in what follows.  
  By \eqref{D}, we have
\begin{align}
\notag \frac{\sum_{f\in \F} \D(f,\phi)w_f}
  {\sum_{f\in \F}w_f}
= \,& \widehat{\phi}(0)+\frac12\phi(0) 
+O\Bigl(\frac{\log\log 3N}{\log Q}\Bigr)
\\
\label{p1}
&-2\sum_{p\nmid N} \frac{\E_{\F}^w(\lambda_\cdot(p))\log p}{p^{1/2}\log Q}
  \widehat{\phi}\Bigl(\frac{\log p}{\log Q}\Bigr)\\
\label{p2}
& -2\sum_{p\nmid N} \frac{\E_{\F}^w(\lambda_\cdot(p^2))\log p}{p\log Q}
  \widehat{\phi}\Bigl(\frac{2\log p}{\log Q}\Bigr).
\end{align}
Taking $s=\tfrac12$, $\psi$ trivial, and $\ell=1,2$ in \eqref{lim},
   we have (using \eqref{FF} when $N$ is prime) 
\begin{equation}\label{p1b}
\E_{\F}^w(\lambda_\cdot(p)) = \chi(p)p^{-1/2} 
  + O\Bigl(\frac{p^{\frac{k-1}2}R^k}{N^{\frac{k-1}2}k^{\frac k2-1}}\Bigr),
\end{equation}
and
\begin{equation}\label{p2b}
\E_{\F}^w(\lambda_\cdot(p^2)) = \chi(p)^2p^{-1} + 
  O\Bigl(\frac{p^{k-1}R^k}{N^{\frac{k-1}2}k^{\frac k2-1}}\Bigr)
\end{equation}
for a positive constant $R$ depending on $D$ and $r$.
It is a consequence of the prime number theorem that for any real number $m>-1$,
\[\sum_{p\le x} p^m\log p\sim \frac{x^{m+1}}{m+1}\]
as $x\to\infty$.  
If the support of $\widehat{\phi}$ is contained in $[-\alpha,\alpha]$, the sum in \eqref{p1}
  is restricted to $p\le Q^\alpha$.
Therefore, the contribution to \eqref{p1} of the error term in \eqref{p1b} is
\[ \ll \frac{R^k}{N^{\frac{k-1}2}k^{\frac k2-1}}
\sum_{p\le Q^\alpha} p^{\frac k2-1}\log p
\ll \frac{Q^{\frac{\alpha k}2}R^k}{(\frac k2)N^{\frac{k-1}2}k^{\frac k2-1}}
= \frac{2N^{\frac{\alpha k}2}k^{\alpha k}R^k}{N^{\frac{k-1}2}k^{\frac k2}}\]
\[
 = O\Bigl(\frac{1}{\log Q}\Bigr),\]
provided $\alpha <\tfrac 12$.  (If $k$ is fixed, we only need $\alpha<1-\tfrac1k$.)
The contribution to \eqref{p2} of the error term in \eqref{p2b} is
\[ \ll \frac{R^k}{N^{\frac{k-1}2}k^{\frac k2-1}}
\sum_{p\le Q^{\alpha/2}} p^{k-2}\log p
\ll \frac{Q^{\frac{\alpha(k-1)}2}R^k}{(k-1)N^{\frac{k-1}2}k^{\frac k2-1}}
\ll \frac{N^{\frac{\alpha(k-1)}2 }k^{\alpha(k-1)}R^k}{N^{\frac{k-1}2}k^{\frac k2}},\]
which may likewise be absorbed into the error term if $\alpha<\tfrac12$.

It remains to treat the contribution of the main terms of \eqref{p1b} and \eqref{p2b}
  to \eqref{p1} and \eqref{p2} respectively.  
  If $\chi$ is trivial, the former yields
\[-2\sum_{p\nmid N} \frac{\log p}{p\log Q}
  \widehat{\phi}\Bigl(\frac{\log p}{\log Q}\Bigr)
=-2\sum_{p} \frac{\log p}{p\log Q}
  \widehat{\phi}\Bigl(\frac{\log p}{\log Q}\Bigr)+O\Bigl(\frac{\log\log 3N}{\log Q}\Bigr)\]
(by (4.19$'$) of \cite{ILS}), which in turn is
\[  =-\phi(0)+O\Bigl(\frac{\log\log 3N}{\log Q}\Bigr)
\]
by the prime number theorem, using the fact that $\phi$ is even.

On the other hand, if $\chi$ is nontrivial,
   then the main term of \eqref{p1b} contributes
\[-2\sum_{p:\chi(p)=1} \frac{\log p}{p\log Q}
  \widehat{\phi}\Bigl(\frac{\log p}{\log Q}\Bigr)
+2\sum_{p:\chi(p)=-1} \frac{\log p}{p\log Q}
  \widehat{\phi}\Bigl(\frac{\log p}{\log Q}\Bigr)
+O\Bigl(\frac{\log\log 3N}{\log Q}\Bigr).\]
The value of $\chi$ is $1$ on
  exactly half of the primes.
By the prime number theorem for arithmetic progressions, the above is
\[=-\frac12\phi(0)+\frac12\phi(0) 
+O\Bigl(\frac{\log\log 3N}{\log Q}\Bigr)
=O\Bigl(\frac{\log\log 3N}{\log Q}\Bigr).\]

Lastly, for any real $\chi$, the contribution of
  the main term of \eqref{p2b} is
\begin{equation}\label{Ep2}
    \ll 2\sum_{p\nmid N} \frac{\log p}{p^2\log Q}
  |\widehat{\phi}\Bigl(\frac{2\log p}{\log Q}\Bigr)|=O\Bigr(\frac1{\log Q}\Bigr).
\end{equation}

Putting everything together, we conclude that when $\alpha<\tfrac12$,
\begin{equation}\label{final}
\frac{\sum_{f\in \F} \D(f,\phi)w_f}
  {\sum_{f\in \F}w_f}
=\begin{cases} \widehat{\phi}(0)-\frac12\phi(0) 
+O\Bigl(\frac{\log\log 3N}{\log (k^2N)}\Bigr), &\chi \text{ trivial}\\
\widehat{\phi}(0)+\frac12\phi(0)
+O\Bigl(\frac{\log\log 3N}{\log (k^2N)}\Bigr), &\chi \text{ nontrivial.}
\end{cases}
\end{equation}
which proves Theorem \ref{llz}.

\section{Weighted equidistribution of Hecke eigenvalues II}\label{eigen2}

We recall the setup from Theorem \ref{llz2}:
 $-D<0$ is the discriminant of a quadratic field $E=\Q[\sqrt{-D}]$,
  $\chi=\chi_{-D}$
   is the associated primitive quadratic character modulo $D$ given
    by the Kronecker symbol $n\mapsto \left(\tfrac {-D}n\right)$,
  and $N$ is a prime number for which
  $\chi(-N)=1$.   The latter condition means that $N$ is inert in $E$.
  For $k>2$ even, we let $\F=\F_{N,k}^{\new}$ be the set of 
  holomorphic newforms of weight $k$ and level $N$.  For $f\in \F$, we 
  define the weight
\begin{equation}\label{wf3}
    w_f=\frac{\Lambda(\tfrac12,f\times \chi)\Lambda(\tfrac12,f)}{\|f\|^2}
=\frac{\Lambda(\tfrac12,f_E)}{\|f\|^2},
\end{equation}
where $f_E$ is the base change of $f$ to $E$.

\begin{proposition}\label{RR}
With hypotheses as above,
for any $\ell\ge 0$, and any prime $p\nmid ND$,
\begin{equation}\label{Ew}
\E^w_\F(\lambda_\cdot(p^\ell)):=
  \frac{\sum_{f\in\F} w_f X_\ell(\lambda_f(p))}{\sum_{f\in\F}
  w_f}= \int_{\R}X_\ell\, d\mt+O\Bigl(\frac{p^{\ell(k+\frac{1}2)}D^{k}}
  {k^{1/2}N^{k/2-\e}}\Bigr),
\end{equation}
where 
\[\mt(x)=\begin{cases}\ds\frac{p+1}{(p^{1/2}+p^{-1/2})^2-x^2}\mu_\infty(x)
  &\text{if }\chi(p)=-1\\
  \ds\frac{p-1}{(p^{1/2}+p^{-1/2}-x)^2}\mu_\infty(x)&\text{if }\chi(p)=1,
\end{cases}\]
and the implied constant depends only on $\chi$, $\ell$, $D$, and $\e\in (0,1)$.
    Furthermore, if $N> p^\ell D$, then \eqref{Ew} holds with {\em no} error term:
    \begin{equation}\label{Ew2}
\E^w_\F(\lambda_\cdot(p^\ell))=
   \int_{\R}X_\ell\, d\mt.
    \end{equation}
\end{proposition}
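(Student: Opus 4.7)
The plan is to invoke the relative trace formula of Feigon--Whitehouse \cite{FW} for $\GL_2$, which provides, for each integer $n$ coprime to $ND$, an identity
\[
\frac{1}{\nu(N)}\sum_{f\in\F} w_f a_f(n) = J_{\mathrm{sing}}(n) + J_{\mathrm{reg}}(n),
\]
where $J_{\mathrm{sing}}(n)$ is the spectral main term and $J_{\mathrm{reg}}(n)$ is a finite sum of orbital integrals indexed by regular double cosets in $E^\times\backslash \GL_2(\Q)/E^\times$ coming from the quadratic field $E = \Q[\sqrt{-D}]$. Taking $n = p^\ell$ and substituting $a_f(p^\ell) = p^{\ell(k-1)/2} X_\ell(\lambda_f(p))$ from \eqref{Cheb} (with $\psi$ trivial) reduces the assertion to evaluating $J_{\mathrm{sing}}(p^\ell)$ and bounding $J_{\mathrm{reg}}(p^\ell)$.

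Next I would carry out the local Satake computation at $p$. The local factor of $J_{\mathrm{sing}}(p^\ell)$ is a matrix coefficient of $\pi_{x,p}$ paired against the characteristic function of the Hecke double coset for $T_{p^\ell}$, integrated against the Plancherel measure at $p$. Applying the Chebyshev generating identity \eqref{Xlid} to each of the two Satake parameters shows that the formal $\ell$-series of these local factors assembles into $L_p(\tfrac12, x)\, L_p(\tfrac12, x, \chi)$, so that after dividing by the $\ell = 0$ value (which supplies the $L_p(1,\chi)^{-1}$ normalization) the ratio $J_{\mathrm{sing}}(p^\ell)/J_{\mathrm{sing}}(1)$ equals $\int_{\R} X_\ell\, d\mt$. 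The split/inert dichotomy $\chi(p) = \pm 1$ then produces the two cases in the piecewise formula for $\mt$: the split case $\chi(p)=1$ gives a perfect square $(p^{1/2}+p^{-1/2}-x)^2$, while the inert case $\chi(p)=-1$ gives the difference of squares $(p^{1/2}+p^{-1/2})^2 - x^2$.

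I would then bound $J_{\mathrm{reg}}(p^\ell)$. Each regular orbit is represented by an element $\gamma\in E$ whose norm satisfies $\operatorname{N}(\gamma)\leq p^\ell D$, the bound coming from the support of the Hecke function at $p$ together with the discriminant factor from the embedding of $E$. The archimedean part of the orbital integral is a Bessel transform contributing a factor of size $k^{-1/2}$ from stationary-phase asymptotics; the Petersson normalization together with the $N$-adic factor produces the dominant decay $N^{-(k/2-\e)}$; and the number of contributing orbits is controlled by $D^k$ once one absorbs a logarithmic class-number correction into $\e$. Combining these yields the quantitative bound in \eqref{Ew}. The exact-vanishing assertion \eqref{Ew2} is then an immediate consequence: when $N>p^\ell D$, the $N$-adic integrality constraint on $\gamma$ cannot be satisfied by any element of the allowed norm range, so $J_{\mathrm{reg}}(p^\ell)\equiv 0$ and the identity becomes exact.

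The main obstacle will be the precise $p$-adic orbital-integral bookkeeping identifying $\mt$ with its correct normalization by $L_p(1,\chi)$, and the uniform tracking of $k$-dependent archimedean and Petersson-norm factors needed to produce the denominator $k^{1/2}N^{k/2-\e}$ rather than a weaker bound. These ingredients are essentially present in Ramakrishnan--Rogawski \cite{RR} and Feigon--Whitehouse \cite{FW} in the case $n=1$; the novelty here is the uniform dependence on $\ell$ and the explicit quantitative $k$-dependence, which one extracts by following the explicit version of the trace formula used in Section \ref{eigen}.
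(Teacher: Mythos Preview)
Your overall architecture is correct and matches the paper: one applies the Ramakrishnan--Rogawski/Feigon--Whitehouse relative trace formula with the local Hecke function whose Satake transform is $X_\ell$, identifies the singular term with $c_k L(1,\chi)\int X_\ell\,d\mt$, and bounds the regular terms; the exact formula \eqref{Ew2} follows because the support condition for the regular orbits cannot be met when $N>p^\ell D$. Your explanation of the split/inert dichotomy giving the two shapes of $\mt$ is also in line with the paper.

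However, your account of how the specific error factors in \eqref{Ew} arise is not right, and following it would not reproduce the bound. First, the factor $k^{-1/2}$ does \emph{not} come from an archimedean stationary-phase estimate of the orbital integral. In the paper, the archimedean integral is bounded as $|I_\infty(x)|\ll |1-x|^{k/2}/|x|$ with an \emph{absolute} implied constant (the $k$-dependent pieces $2^k k\,B(\tfrac12,\tfrac{k-2}4)B(\tfrac k2,\tfrac k2)$ cancel by Stirling). The $k^{-1/2}$ appears only at the very end, when one divides the numerator by the denominator $\sum_f w_f$: the latter has main term $c_k L(1,\chi)$ with $c_k\sim\sqrt{k/2\pi}$. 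Second, the factor $D^k$ does not arise from ``the number of contributing orbits'' or a class-number correction; it comes from the archimedean bound just mentioned. Writing $M=p^\ell D$ and parametrizing the regular terms by integers $n\in M+N\Z$, one has $|I_\infty(\tfrac{n-M}{n})|\ll M^{k/2+1}/|n|^{k/2}$, and then the sum $\sum_{m\neq 0}|m+M/N|^{-(k/2-\e)}$ is itself $O(M^{k/2-\e})$; the product of these two powers of $M$ is what produces $p^{\ell(k+\tfrac12)}D^k$. Third, the published error in \cite{FW} and \cite{MR} for this average is only $O(N^{-1})$ once regular terms are present; to get $O(N^{-k/2+\e})$ one must return to the original \cite{RR} computation and redo the local orbital integral bounds (the paper in fact corrects several errors in \cite{RR}, Proposition~2.4, including the archimedean case). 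So ``invoking \cite{FW}'' alone will not deliver \eqref{Ew}; the substantive work is the corrected version of the \cite{RR} local analysis carried out in Section~\ref{RRthm}.
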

  Equation \eqref{Ew} is essentially the main result of \cite{RR}, but we have divided by
  the sum of the weights, and shown the dependence
  on $p$ and $k$ explicitly in the error term.
  The proof is somewhat involved, so we defer it to Section \ref{RRthm}.
  Equation \eqref{Ew2} likewise follows from a special case of \cite[Theorem 6.1]{FW}.
  Details are provided in Section \ref{FWthm}.

\begin{corollary}\label{RRcor}
Assume the hypotheses above. 
 Then the multiset $\{\lambda_f(p)|\,f\in\F_{N,k}^{\new}\}$ of normalized
   $p$-th Hecke eigenvalues, when weighted as above, becomes 
  equidistributed in $[-2,2]$ with respect to the measure $\mt$ as $N\to\infty$.
  Thus, for any continuous function $\phi$,
\begin{equation}\label{RRlim}
\lim_{N\to\infty}
  \frac{\sum_{f\in\F_{N,k}^{\new}} w_f\, \phi(\lambda_f(p))}{\sum_{f\in\F_{N,k}^{\new}}w_f} 
=\int_{\R}\phi\,d\mt.
\end{equation}
Moreover, if $\phi$ is a polynomial of degree $d$, then
  \begin{equation}\label{RRq}
\frac{\sum_{f\in\F} w_f\, \phi(\lambda_f(p))}{\sum_{f\in\F}
  w_f}= \int_{\R}\phi\, d\mt
+O\Bigl(\|\phi\|_{ST}\frac{p^{d(k+\frac{1}2)}D^{k}} {k^{1/2}N^{k/2-\e}}\Bigr).
\end{equation}
      The error term in \eqref{RRq} vanishes if $N> p^d D$.
\end{corollary}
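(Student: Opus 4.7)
The plan is to deduce Corollary \ref{RRcor} from Proposition \ref{RR} by exactly the same linear-algebraic bootstrapping used in the proof of Theorem \ref{main}: expand arbitrary polynomials in the Chebyshev basis $\{X_\ell\}$, apply linearity, and then pass to continuous functions via Weierstrass approximation on $[-2,2]$. Since Proposition \ref{RR} handles the case $\phi = X_\ell$, the remaining work is essentially mechanical.

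For the quantitative bound \eqref{RRq}, I would expand a polynomial $\phi$ of degree $d$ in the orthonormal basis of $L^2(\R,\mu_\infty)$, writing $\phi = \sum_{\ell=0}^d \langle \phi,X_\ell\rangle_{ST}\,X_\ell$, apply linearity of $\E^w_\F$ and of the integral against $\mt$, and bound using the Cauchy--Schwarz estimate $|\langle \phi,X_\ell\rangle_{ST}| \le \|\phi\|_{ST}$:
\[\left|\E^w_\F(\phi(\lambda_\cdot(p))) - \int_\R \phi\,d\mt\right| \le \|\phi\|_{ST}\sum_{\ell=0}^d \left|\E^w_\F(X_\ell(\lambda_\cdot(p))) - \int_\R X_\ell\,d\mt\right|.\]
Inserting the bound from \eqref{Ew} produces a geometric sum in $p^{\ell(k+1/2)}$, which is dominated by its $\ell=d$ term, giving \eqref{RRq}. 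For the vanishing statement when $N > p^d D$, I would use \eqref{Ew2} in place of \eqref{Ew}: since $N > p^d D \ge p^\ell D$ for every $\ell \le d$, each summand on the right-hand side above is exactly zero, so linearity yields the exact identity.

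For the equidistribution statement \eqref{RRlim}, I would fix continuous $\phi$ and observe that all eigenvalues $\lambda_f(p)$ and the support of $\mt$ lie in the compact interval $[-2,2]$, so only the restriction of $\phi$ to $[-2,2]$ matters. Applying \eqref{RRq} to each of a sequence of polynomials $\phi_n$ converging uniformly to $\phi$ (Weierstrass) gives $\lim_{N\to\infty}\E^w_\F(\phi_n(\lambda_\cdot(p))) = \int \phi_n\,d\mt$ for each $n$, and the difference $\phi - \phi_n$ is controlled on both sides by $\|\phi - \phi_n\|_\infty$, since $\E^w_\F$ is a convex combination (the weights $w_f \ge 0$ by Guo's theorem applied to each factor of $\Lambda(\tfrac12,f\times\chi)\Lambda(\tfrac12,f)$) and $\mt$ is a probability measure. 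Given that Proposition \ref{RR} does all the heavy lifting, there is no substantive obstacle; the only point requiring explicit verification is that $\int_\R d\mt = 1$ in both cases $\chi(p) = \pm 1$, which follows immediately from the $\ell=0$ case of \eqref{Ew2} or by direct computation using \eqref{Xlid}.
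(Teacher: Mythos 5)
Your proof is correct and follows essentially the same route as the paper: the paper establishes the result for $\phi=X_\ell$ via Proposition \ref{RR}, extends by linearity to polynomials with the Cauchy--Schwarz estimate as in the proof of Theorem \ref{main}, and then invokes Weierstrass approximation. One small improvement in your write-up: you explicitly justify the Weierstrass step by noting that $w_f\ge 0$ (Guo's theorem applied to each factor) and $\mt$ is a probability measure, so both sides of \eqref{RRlim} are Lipschitz in $\|\phi-\phi_n\|_\infty$; the paper leaves this implicit by pointing to the proof of Theorem \ref{main}, where the weights can be complex and this uniform-boundedness point is not spelled out.
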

\begin{remarks}
    (1) In Theorem A of \cite{RR}, a much stronger claim is made,
  namely that in \eqref{RRq}, by Weierstrass approximation we can take $\phi$ to be the 
  characteristic function of any subinterval of $[-2,2]$,
  preserving the error term $O(N^{-k/2+\e})$.
  However, because the error in \eqref{RRq} depends in a crucial way on 
  the approximating polynomial $\phi$,
  their argument is incomplete.
  Possibly one could use the method of Murty and 
  Sinha \cite{MS}, but we have not investigated this.

(2) Because of \eqref{Ew2}, the weight $k$ may vary in any fashion as $N\to \infty$.
  However we cannot obtain the conclusion
  for fixed $N$ and $k\to\infty$ because the factor $(\tfrac{p^\ell D}N)^{k/2}$ 
  in the error term of \eqref{Ew} will tend to $\infty$ rapidly with $k$ 
  when $\ell$ is large.

(3) It is not hard to show that $\mt(x)=
   \frac{L_p(\frac12,x,\chi)L_p(\frac12,x)}{L_p(1,\chi)}\mu_\infty(x)$, in the notation
     of \eqref{Lx}.  So the above
  result may be interpreted in a manner analogous to the remark after Theorem \ref{main}.
\end{remarks}

\begin{proof}[Proof of Corollary \ref{RRcor}]
  The limit \eqref{RRlim} holds for $\phi=X_\ell$ by \eqref{Ew}, and then
  Weierstrass approximation gives it for any continuous $\phi$.
  The rest of the proof proceeds in just the same way as that of Theorem \ref{main}.
\end{proof}

\section{Low-lying zeros II}\label{II}

Here we will use Proposition \ref{RR} to prove Theorem \ref{llz2}.  
  First we need to compute the integrals of the Chebyshev polynomials against
  the measure $\mt$ defined in Proposition \ref{RR}.

\begin{proposition} \label{Xetap}
  Let $r\ge 0$ be an integer.  Then
if $\chi(p)=-1$,
\[\int_{-\infty}^\infty X_r \,d\mt = \begin{cases} p^{-r/2}&\text{if $r$ is even},\\
0&\text{if $r$ is odd.}\end{cases}\]
If $\chi(p)=1$, then 
\[\int_{-\infty}^\infty X_r\, d\mt = (r+1)p^{-r/2}.\]
\end{proposition}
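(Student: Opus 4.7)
The plan is to recognize the density $d\mt/d\mu_\infty$ in each case as an explicit Chebyshev generating series, and then read off the integrals from the orthonormality of $\{X_r\}$ relative to $\mu_\infty$.

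For the case $\chi(p)=-1$, I would start from the basic generating identity \eqref{Xlid}, namely $\sum_{\ell\ge 0} t^\ell X_\ell(x) = (1-xt+t^2)^{-1}$. Averaging this against $t\mapsto -t$ extracts the even part:
\[
\sum_{\ell\ge 0} t^{2\ell} X_{2\ell}(x) = \tfrac{1}{2}\Bigl(\tfrac{1}{1-xt+t^2}+\tfrac{1}{1+xt+t^2}\Bigr) = \frac{1+t^2}{(1+t^2)^2 - x^2 t^2}.
\]
Specializing $t=p^{-1/2}$ and clearing $p$ from numerator and denominator yields
\[
\sum_{\ell\ge 0} p^{-\ell} X_{2\ell}(x) = \frac{p+1}{(p^{1/2}+p^{-1/2})^2 - x^2} = \frac{d\mt}{d\mu_\infty}(x).
\]
Then by orthonormality of the Chebyshev polynomials with respect to $\mu_\infty$,
\[
\int X_r\, d\mt = \sum_{\ell\ge 0} p^{-\ell}\int X_r X_{2\ell}\, d\mu_\infty,
\]
which is $p^{-r/2}$ when $r$ is even and $0$ when $r$ is odd.

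For $\chi(p)=1$, I would differentiate \eqref{Xlid} in $t$ and rearrange to obtain
\[
\sum_{\ell\ge 0} (\ell+1) t^\ell X_\ell(x) = \frac{1-t^2}{(1-xt+t^2)^2}.
\]
Setting $t=p^{-1/2}$, the identity $p(1-xp^{-1/2}+p^{-1})^2 = (p^{1/2}+p^{-1/2}-x)^2$ gives
\[
\sum_{\ell\ge 0} (\ell+1) p^{-\ell/2} X_\ell(x) = \frac{p-1}{(p^{1/2}+p^{-1/2}-x)^2} = \frac{d\mt}{d\mu_\infty}(x),
\]
and orthonormality then picks out $\int X_r\, d\mt = (r+1)p^{-r/2}$.

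The only real work is algebraic: guessing that the densities arise from the generating function (and its $t$-derivative). There is no analytic difficulty, since the series converge absolutely for $|x|\le 2$ and $|t|<1$, justifying termwise integration against $\mu_\infty$. Once the series representations are in hand, the proposition is immediate.
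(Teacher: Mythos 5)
Your proposal is correct, and for the $\chi(p)=1$ case it takes a genuinely cleaner route than the paper. The paper begins from the same generating identity \eqref{Xlid} but then \emph{squares} the series $\sum p^{-n/2}X_n(x)$, splits into diagonal and off-diagonal pieces, linearizes the products $X_m X_n$ via the Clebsch--Gordon formula, and reindexes the resulting triple sum through an explicit bijection $(m,n,k)\mapsto(m-n+2k,m-n,m)$ to land on $\sum_{r\ge 0}(r+1)p^{-r/2}X_r(x)$. You instead obtain the same expansion in one line by applying $t\,\partial_t + 1$ to the identity $\sum t^\ell X_\ell = (1-xt+t^2)^{-1}$, which yields $\sum(\ell+1)t^\ell X_\ell = (1-t^2)/(1-xt+t^2)^2$, then specializing $t=p^{-1/2}$; the algebraic simplification $p(1-xp^{-1/2}+p^{-1})^2=(p^{1/2}+p^{-1/2}-x)^2$ is immediate. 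This avoids Clebsch--Gordon and the combinatorial bookkeeping entirely. For the $\chi(p)=-1$ case the paper simply cites Serre; your averaging-over-$t\mapsto -t$ argument gives a self-contained derivation of the same Plancherel density expansion. In both cases you correctly invoke absolute convergence on $[-2,2]$ with $|t|=p^{-1/2}<1$ to justify term-by-term integration, and orthonormality of $\{X_r\}$ in $L^2(\mu_\infty)$ then gives the stated moments. Both approaches are rigorous; yours is shorter and would be a reasonable simplification of the paper's proof.
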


\begin{proof}
    The first assertion is well-known (\cite[p.79]{Serre}).
For the second, using \eqref{Xlid} we have
\[ \frac{p-1}{(p^{1/2}+p^{-1/2}-x)^2}=\frac{1-\frac1p}{(1-p^{-1/2}x+p^{-1})^2}
  =(1-\tfrac1p)\left[\sum_{n=0}^\infty p^{-n/2}X_n(x)\right]^2\]
\begin{equation}\label{Xeq}
=(1-\tfrac1p)\left[\sum_{j=0}^\infty X_j(x)^2p^{-j}
  +2 \sum_{m=1}^\infty\sum_{n=0}^{m-1}X_m(x)X_n(x)p^{-(m+n)/2}
\right].
\end{equation}
By the Clebsch-Gordon formula 
 (or by induction using $X_{n+1}(x)=xX_n(x)-X_{n-1}(x)$),
 we have
\[X_m(x)X_n(x)=\sum_{k=0}^n X_{m-n+2k}(x),\qquad(n\le m).\]
 So \eqref{Xeq} becomes
\begin{equation}\label{Xsum}
(1-\tfrac1p)\left[\sum_{j=0}^\infty \sum_{t=0}^j X_{2t}(x)p^{-j}
  +2 \sum_{m=1}^\infty\sum_{n=0}^{m-1}\sum_{k=0}^n X_{m-n+2k}(x)p^{-(m+n)/2}
\right].
\end{equation}
For the double sum, 
\begin{equation}\label{2sum}
\sum_{j=0}^\infty \sum_{t=0}^j X_{2t}(x)p^{-j}=\sum_{t=0}^\infty X_{2t}(x)
  \sum_{j=0}^\infty p^{-(j+t)}
=(1-\tfrac1p)^{-1}\sum_{t=0}^\infty X_{2t}(x)p^{-t}.
\end{equation}
For the triple sum, we observe that the map $(m,n,k)\mapsto (m-n+2k,m-n,m)$
defines a bijection from
\[\{(m,n,k)|\, m\ge 1,\, 0\le n\le m-1,\, 0\le k\le n\}\]
to
\[\{(u,b,m)|\, u\ge 1,\, 1\le b\le u,\, b\equiv u\mod 2,\, m\ge \tfrac{u+b}2\}
\]
with inverse $(u,b,m)\mapsto (m, m-b, \tfrac{u-b}2)$.
Therefore,
\[ \sum_{m=1}^\infty\sum_{n=0}^{m-1}\sum_{k=0}^n X_{m-n+2k}(x)p^{-(m+n)/2}
=\sum_{u=1}^\infty X_u(x)\sum_{b\equiv u\mod 2\atop{1\le b\le u}}
  \sum_{m=\frac{u+b}2}^\infty p^{-(2m-b)/2}\]
\[= \sum_{u=1}^\infty X_u(x) \sum_{b\equiv u\mod 2\atop{1\le b\le u}}
  p^{b/2}p^{-(u+b)/2} (1-\tfrac1p)^{-1}
\]
\[= (1-\tfrac1p)^{-1}\sum_{u=1}^\infty X_u(x)p^{-u/2} \sum_{b\equiv u\mod 2\atop{1\le b\le u}}
1
\]
The sum over $b$ has the value $\tfrac u2$ if $u$ is even, 
 and $\tfrac{u+1}2$ if $u$ is odd. Using this and \eqref{2sum}, \eqref{Xsum}
  becomes
\[\sum_{r\ge 0 \text{ even}} X_{r}(x)p^{-r/2} + 2\sum_{r\ge 2\text{ even}}
 \tfrac r2X_r(x)p^{-r/2}+2\sum_{r\ge 1\text{ odd}}\tfrac{r+1}2X_r(u)p^{-r/2}.
\]
In the middle sum, we can actually take $r\ge 0$ because of the $\tfrac r2$ coefficient.
This proves that
\begin{equation}\label{etap}
d\mt(x)=\sum_{r=0}^\infty(r+1)p^{-r/2}X_r(x) d\mu_\infty(x).
\end{equation}
The proposition now follows immediately using the orthonormality of the
  Chebyshev polynomials relative to $d\mu_\infty$.
\end{proof}

With this proposition in hand, we obtain the following two
  special cases of Proposition \ref{RR}.

\begin{corollary} In the notation of Proposition \ref{RR}, for any $0<\e<1$,
\begin{equation}\label{p12}
\E^w_\F(\lambda_\cdot(p)) =\begin{cases} 2p^{-1/2} 
  +O(\frac{p^{k+\frac{1}2}}{N^{k/2-\e}})
  & \text{if }\chi(p)=1\\\\
 O(\frac{p^{k+\frac{1}2}}{N^{k/2-\e}})&\text{if }\chi(p)=-1,\end{cases}
\end{equation}
    the error terms vanishing if $p<\frac ND$, and
\begin{equation}\label{p22}
\E^w_\F(\lambda_\cdot(p^2)) 
  =\begin{cases} 3p^{-1} +O(\frac{p^{2k+1}}{N^{k/2-\e}})
  & \text{if }\chi(p)=1\\\\
  p^{-1} +O(\frac{p^{2k+1}}{N^{k/2-\e}})
  & \text{if }\chi(p)=-1,\end{cases}
\end{equation}
    the error terms vanishing if $p^2<\frac ND$. 
      Implied constants depend on $k$, $D$ and $\e$.
\end{corollary}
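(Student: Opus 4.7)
The plan is to read off the two formulas as immediate specializations of Proposition \ref{RR} at $\ell=1$ and $\ell=2$, using the explicit integrals computed in Proposition \ref{Xetap} to identify the main terms. Since $\lambda_f(p^\ell) = X_\ell(\lambda_f(p))$ by the Chebyshev identity recalled in Section 2, the quantity $\E^w_\F(\lambda_\cdot(p^\ell))$ is exactly the left-hand side of \eqref{Ew}.

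For \eqref{p12}, I would take $\ell = 1$ in Proposition \ref{RR}, giving
\[\E^w_\F(\lambda_\cdot(p)) = \int_\R X_1 \, d\mt + O\!\left(\frac{p^{k+\frac12}D^k}{k^{1/2}N^{k/2-\e}}\right).\]
By Proposition \ref{Xetap} (applied with $r=1$), the integral equals $2p^{-1/2}$ when $\chi(p)=1$ and vanishes when $\chi(p) = -1$, yielding both cases of \eqref{p12}. The factor $D^k$ (and $k^{-1/2}$) gets absorbed into the implied constant, which is permitted to depend on $k$, $D$, $\e$.

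For \eqref{p22}, I would repeat the argument with $\ell = 2$. Proposition \ref{Xetap} gives $\int X_2 \, d\mt = 3p^{-1}$ when $\chi(p) = 1$ and $\int X_2 \, d\mt = p^{-1}$ when $\chi(p) = -1$, producing the two cases. The error term becomes $O(p^{2(k+\frac12)}/N^{k/2-\e}) = O(p^{2k+1}/N^{k/2-\e})$ after absorbing the $D^k$ and $k^{-1/2}$ factors.

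Finally, the statement that the error terms vanish under the size hypotheses $p < N/D$ (resp.\ $p^2 < N/D$) is a direct invocation of \eqref{Ew2}, since those inequalities are exactly $N > p^\ell D$ for $\ell = 1$ and $\ell = 2$. There is no real obstacle here: the result is a clean corollary, and the only mild bookkeeping is matching the integral values from Proposition \ref{Xetap} to the cases of $\chi(p) = \pm 1$ and confirming that the polynomial in $p$ in the error term has the stated exponent.
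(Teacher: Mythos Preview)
Your proposal is correct and follows exactly the same route as the paper: the corollary is stated there as an immediate specialization of Proposition~\ref{RR} at $\ell=1,2$, with the main terms read off from Proposition~\ref{Xetap} and the vanishing of the error from \eqref{Ew2}. There is nothing to add.
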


We can now prove Theorem \ref{llz2} following the method in Section \ref{llz1}. 
Suppose $\Supp(\phi)\subset [-\alpha,\alpha]$ for some $\alpha<1$.  Then for all $N$
  sufficiently large,
\begin{equation}\label{NQ}
 Q^\alpha = N^{\alpha}k^{2\alpha} < \frac ND.
\end{equation}
  In the explicit formula, the sum \eqref{p1} involves only primes $p\le Q^\alpha$, which
    by the above means that \eqref{p12} holds with no error term.
  Therefore \eqref{p1} is equal to
\[-2\sum_{p\nmid N\atop{\chi(p)=1}}\frac{2\log p}{p\log Q}\widehat{\phi}
 (\frac{\log p}{\log Q})
=-4\sum_{p\atop{\chi(p)=1}} \frac{\log p}{p\log Q}
  \widehat{\phi}\Bigl(\frac{\log p}{\log Q}\Bigr)+
  O\Bigl(\frac{\log\log 3N}{\log Q}\Bigr).\]
Because $\chi$ is a nontrivial quadratic character, its value is 1 on exactly
  half of the primes.  By the prime number theorem for arithmetic progressions,
  the above is
\[=-\phi(0)+O\Bigl(\frac{\log\log 3N}{\log Q}\Bigr).\]

The sum \eqref{p2} involves only primes satisfying $p^2\le Q^\alpha$.  So for sufficiently
large $N$ as above, we may apply \eqref{p22} with no error term.  Substituting it into
\eqref{p2}, one obtains an expression that can be absorbed into the error term 
$O\Bigl(\frac{\log\log 3N}{\log Q}\Bigr)$, as in \eqref{Ep2}.

It now follows that if $\alpha<1$ and $N$ satisfies \eqref{NQ},  
\[\frac{\sum_{f\in \F} \D(f,\phi)w_f}
  {\sum_{f\in \F}w_f}
=\widehat{\phi}(0) -\frac12\phi(0)
+O\Bigl(\frac{\log\log 3N}{\log N}\Bigr)\]
for an implied constant depending only on $\phi$.  This proves Theorem \ref{llz2}.

We remark that if we instead fix $N$ and allow $k\to\infty$, 
   we cannot obtain the analog of Theorem \ref{llz2} by this method.
   Indeed, the contribution of the error term in \eqref{p12} to \eqref{p1} gives
  an expression involving
\begin{equation}\label{Qk}
  \sum_{p\le Q^\alpha}p^{k}\log p,
\end{equation}
which up to small powers of $k$ grows like $k^{\alpha k}$. 
  There is not enough decay in the $k$ aspect in 
  \eqref{Ew} to cancel this growth as $k\to\infty$ for any $\alpha>0$.

\section{Proof of Proposition \ref{RR}}

The papers \cite{RR} and \cite{FW} each use the relative trace formula to
  develop a formula for an average of $L$-values which in the simplest case takes
    the form
\[\sum_{f\in \F_{N,k}^{\new}} w_f \widehat{f}_p(\pi_p),\]
where: $w_f=\frac{\Lambda(\frac 12,f\times \chi)\Lambda(\frac12,f)}{\|f\|^2}$ for
  a quadratic character $\chi=\chi_{-D}$,
 $N$ is a prime not dividing $D$, $\chi(-N)=1$, $k>2$ is even,
$p\neq N$ is a prime,
  $\widehat{f}_p$ is the Satake transform
  of a compactly supported bi-$\GL_2(\Z_p)$-invariant local test function 
    $f_p: \GL_2(\Q_p)\longrightarrow \C$, and $\pi_p$ is the
  unramified local representation determined by the cusp form $f$.
  (In \cite{RR} the notation $f_p^\wedge(a_p(\varphi))$ is used,
   where $a_p(\varphi)$ corresponds with our $\lambda_f(p)$.)

   For our purpose, we need to choose the particular test function
  $f_p$ whose Satake transform is equal to the Chebyshev polynomial $X_\ell$. 
   This function is given as follows.  For $K_p=\GL_2(\Z_p)$ and $Z_p$ the center
  of $\GL_2(\Q_p)$, let 
\[M(p^\ell) = \bigcup_{i+j=\ell\atop{i\ge j\ge 0}}Z_p K_p\mat{p^i}{}{}{p^j}K_p
= \bigcup_{i+j=\ell\atop{i\ge j\ge 0}}Z_p K_p\mat{p^{i-j}}{}{}{1}K_p\]
\begin{equation}\label{Mpl}
=\bigcup_{j=0}^{\lfloor\frac\ell2\rfloor} Z_pK_p\mat{p^{\ell-2j}}{}{}1K_p.
\end{equation}
Define $f_p:\GL_2(\Q_p)\longrightarrow\C$ by
\begin{equation}\label{fp}
f_p(g)=\begin{cases} p^{-\ell/2}&\text{if }g\in M(p^\ell)\\
  0&\text{otherwise.}\end{cases}
\end{equation}

\begin{proposition}
For $f_p$ as above, and any newform $f\in S_k(N)$, let
  $\pi_p$ be the unramified principal series representation of $\GL_2(\Q_p)$
  determined by $f$.  Then
    \[\widehat{f_p}(\pi_p)=X_\ell(\lambda_f(p)).\]
\end{proposition}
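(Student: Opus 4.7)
The approach is to compute $\widehat{f_p}(\pi_p)$ by decomposing $f_p$ via \eqref{Mpl} and applying the Satake transform term-by-term. Because the listed double cosets are pairwise disjoint, if we set $h_m = \mathbf{1}_{Z_p K_p \smat{p^m}{}{}{1} K_p}$ then
\[f_p = p^{-\ell/2}\sum_{j=0}^{\lfloor \ell/2\rfloor} h_{\ell - 2j},\]
so the computation reduces to evaluating $\widehat{h_m}(\pi_p)$ for each $m \ge 0$ and summing.

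For the unramified principal series $\pi_p$ with (unitary) Satake parameters $\alpha, \beta$ satisfying $\alpha\beta = 1$ and $\alpha + \beta = \lambda_f(p)$, the standard Macdonald spherical function formula for $\GL_2$ gives
\[\widehat{h_m}(\pi_p) = p^{m/2} X_m(\lambda_f(p)) - p^{m/2 - 1} X_{m-2}(\lambda_f(p)),\]
with the convention $X_{-1} = X_{-2} = 0$.  This is equivalent to the Hall-Littlewood identity $P_{(m,0)}(\alpha,\beta; p^{-1}) = s_m(\alpha,\beta) - p^{-1}\alpha\beta \cdot s_{m-2}(\alpha,\beta)$, combined with the specialization $s_m(\alpha,\beta) = \sum_{i + j = m}\alpha^i\beta^j = X_m(\alpha + \beta)$ valid when $\alpha\beta = 1$.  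Plugging this into the decomposition of $f_p$ and reindexing the subtracted piece via $j' = j + 1$, consecutive positive and negative contributions cancel pairwise for $j' = 1, \dots, \lfloor \ell/2\rfloor$, and the tail vanishes because $X_{-1} = X_{-2} = 0$; only the $j = 0$ term of the leading sum survives, giving $\widehat{f_p}(\pi_p) = X_\ell(\lambda_f(p))$ as claimed.

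The main obstacle is fixing the correct normalization of the Satake transform (Haar measure and central character) so that it matches the conventions of the relative trace formulas of \cite{RR} and \cite{FW}.  If a more self-contained derivation is preferred, one can alternatively proceed by induction on $\ell$: verify the base cases $\ell = 0, 1$ by direct computation on the spherical vector, establish the convolution identity $g_1 * g_{\ell - 1} = g_\ell + g_{\ell - 2}$ (with $g_m = p^{-m/2}\mathbf{1}_{M(p^m)}$) via a careful double-coset calculation mirroring the classical Hecke relation $T_p T_{p^{\ell-1}} = T_{p^\ell} + p^{k-1}T_{p^{\ell-2}}$, and conclude using the Chebyshev recurrence $xX_{\ell-1}(x) = X_\ell(x) + X_{\ell-2}(x)$.
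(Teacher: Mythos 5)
Your proof is correct, but it takes a more self-contained route than the paper's. The paper disposes of the proposition in two lines by citing Propositions 4.4--4.5 of [KL2] (the reference \texttt{pethil}), which already compute that the Satake transform of $\mathbf{1}_{M(p^\ell)}$ on the spherical vector is $p^{\ell/2}X_\ell(\lambda_f(p))$; scaling by $p^{-\ell/2}$ then gives the claim. You instead derive that identity directly: decompose $M(p^\ell)$ into the disjoint Cartan double cosets via \eqref{Mpl}, apply the Macdonald/Hall--Littlewood formula $\widehat{h_m}(\pi_p)=p^{m/2}X_m(\lambda_f(p))-p^{m/2-1}X_{m-2}(\lambda_f(p))$ to each piece, and observe that the sum telescopes because the negative part of the $j$-th term equals the positive part of the $(j+1)$-st term, with the tail killed by $X_{-1}=X_{-2}=0$. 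I checked the algebra: writing $a_j=p^{(\ell-2j)/2}X_{\ell-2j}$, one has $\widehat{h_{\ell-2j}}=a_j-a_{j+1}$, so the sum collapses to $a_0=p^{\ell/2}X_\ell$, in agreement with the cited result. The trade-off is the usual one: the paper's proof is shorter but opaque without [KL2] in hand, while yours makes the mechanism visible at the cost of invoking the Macdonald spherical function formula (or, in your alternative, the double-coset convolution identity mirroring the classical Hecke relation). Your remark about pinning down normalizations (Haar measure, central character) is the right thing to flag, and is exactly the detail that the citation to [KL2] silently absorbs; as long as you adopt the same measure conventions as \cite{RR} and \cite{FW}, as the paper implicitly does, your computation goes through.
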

\begin{proof}
  Denoting the Satake parameters of $\pi_p$ by
  $\{\alpha,{\alpha}^{-1}\}$, we have $\alpha+{\alpha}^{-1}=\lambda_f(p)$.
    By definition, $\widehat{f_p}(\pi_p)$ is the eigenvalue of the operator
  $\pi_p(f_p)$ acting on the unique $K_p$-fixed vector of $\pi_p$.
  For the moment, take $f_p$ to be the characteristic function of the set $M(p^\ell)$
  defined above. It
  is shown in \cite[Propositions 4.4-4.5]{pethil} that, in our current notation, 
  $p^{-\ell/2}\widehat{f}_p(\pi_p)=X_\ell(\lambda_f(p))$.
  Therefore, upon scaling the characteristic function by $p^{-\ell/2}$
  we get the desired result.
\end{proof}

\subsection{The theorem of Feigon and Whitehouse}\label{FWthm}

Equation \eqref{Ew2} of Proposition \ref{RR} follows immediately from the special 
  case of \cite[Theorem 6.1]{FW} given in \eqref{FW} below.
  Following \cite[\S 6.3]{FW}, we take $F=\Q$, $\Omega$ trivial,
 and $N$ prime with $N>D p^\ell$ 
    and $\chi(-N)=1$.  Then taking $f_p$ as in \eqref{fp},
\cite[Theorem 6.1]{FW} gives
  \begin{equation}\label{FW}
      \frac1{\nu(N)}\sum_{f\in \F_{N,k}^\new}w_f\, X_\ell(\lambda_f(p))
  =c_k L(1,\chi)\int_{-\infty}^\infty X_\ell\, d\mt,
  \end{equation}
  where 
  \[ c_k=\frac{2^k}{4\pi}\frac{(\frac k2-1)!^2}{(k-2)!}=
  \frac{k-1}{4\pi}2^kB(\frac k2,\frac k2)\]
  for the Beta function $B$.
 (Variants of the exact formula \eqref{FW} may also be found in \cite{MR}, \cite{FMP}, and
    \cite{ST}.)

    \begin{remarks}
    (1) We have adjusted for the fact that we have normalized the
      completed $L$-functions as in \eqref{L},
      whereas the normalization in \cite[p. 407]{FW} is twice ours.

(2) We have also adjusted for the fact that the $L$-value $L(1,\chi)$ 
  is the Dirichlet series (not completed by a Gamma factor), 
    whereas in \cite{FW} the completed $L$-value is used,
    normalized by $L_\infty(1,\chi)=(2\pi)^{-1}$ as seen in \cite[p. 407]{FW}.

(3) The lower bound for $N$ of $Dp^\ell$ comes from the definition of $\mathcal{J}(f_p)$
 found in \cite[p. 386]{FW}.  Since $p\nmid N$, we have $G(\Q_p)=\PGL_2(\Q_p)$,
   and using \eqref{Mpl} it follows that for our particular test function,
   $|\mathcal{J}(f_p)|=p^\ell$.  This matches \cite[Corollary 1]{MR}.\\
    \end{remarks}

\subsection{The theorem of Ramakrishnan and Rogawski}\label{RRthm}

As powerful as \eqref{FW} is, it is of interest in some situations to
  have a formula for the averages in which $N$ is not required to be large
   in relation to $D$ and $p^\ell$.  In this range, the error bound given in
\cite{FW} and \cite{MR} is $O(N^{-1})$ in the $N$-aspect,
  so the best bound remains that found in the original paper of Ramakrishnan and Rogawski
     who obtained \eqref{FW} up to $O(N^{-k/2+\e})$.
   By going through their calculations, we will uncover the dependence 
   of the error on both $k$ and $p$.
  The final result is given in Theorem \ref{RR2}.

With the choice of test function \eqref{fp}, the spectral side of the relative trace formula in
  \cite[Prop. 4.1]{RR} becomes
\[      \frac1{\nu(N)}\sum_{f\in \F_{N,k}^\new}w_f\, X_\ell(\lambda_f(p))
  =c_k L(1,\chi)\int_{-\infty}^\infty X_\ell\, d\mt+ I_{reg}
  \]
for $c_k$ as above, where
\[I_{reg}= \sum_{x\in\Q-\{0,1\}} I(x),\]
is the sum of the so-called regular terms, where, 
  for a certain test function $f$ whose local components will be recalled below, 
\[I(x)=\iint_{\A^*\times\A^*} f(\mat{ab}{ax}{b}1)\chi(a)^{-1} d^*a\,d^*b.\]
Here, we abuse notation and write $\chi$ for the unitary adelic Hecke character 
  determined by the Dirichlet character $\chi$ fixed earlier.
The integrals $I(x)$ are computed locally in \cite[\S2.7]{RR} and their
  sum is bounded in \S 3 of their paper.  We shall reexamine these proofs
  in order to determine the dependence on $p$ and $k$.

  The statements of \cite[Prop. 2.4abcde]{RR} each
  have errors, but this does not affect the validity of the trace formula given
  in \S5 of their paper.
  The following is a corrected version of their proposition.

\begin{proposition}\label{Ipx}  
For $x\in\Q-\{0,1\}$ and $f_v$ as in \cite{RR}, define the local integrals
\[I_v(x)=\iint_{\Q_v^*\times\Q_v^*}f_v(\mat{ab}{ax}b1)\chi_v(a)^{-1}d^*a\,d^*b.\]
  Then the following statements hold.
\begin{enumerate}
\item[(a)] Let $v=q$ be a finite prime not dividing $pND$. Then: 
\begin{itemize}
\item $I_v(x)=0$ if $v(1-x)>0$.
\item If $v(1-x)=0$ and $v(x)=0$, then $I_v(x)=1$.  
\item Generally if $v(1-x)\le 0$, then
\[|I_v(x)| \le \begin{cases}v(x)^2&\text{if }v(x)\neq 0\\1&\text{if }v(x)=0.
  \end{cases}\]
\end{itemize}
\item[(b)] Let $v=q$ be a prime dividing $D$, and write $c=v(D)\ge 1$.
  Then:
\begin{itemize}
\item $I_v(x)=0$ if $v(1-x)>c$.
\item If $v(1-x)\le c$, then 
\[|I_v(x)|\le 6q^{c/2}(2c+1 +|v(x)|)\le 6q^{c/2}(2c+1)(1+|v(x)|).\]
\end{itemize}
\item[(c)] Let $v=N$.  Then $I_v(x)=0$ unless $v(x)\ge 1$ (and hence $v(1-x)=0$).
   In this case
\[|I_N(x)|\le \nu(N)|v_N(x)|.\]
\item[(d)] Let $v=p$, and let $f_p$ be the test function defined in \eqref{fp}.
We suppose $\ell>0$ since the $\ell=0$ case is covered by (a).
    Then $I_p(x)$ vanishes unless $v(1-x)\le \ell$, in which case
\[|I_p(x)|\le 4p^{-\ell/2}\ell (\ell+1+|v(x)|)\le 4p^{-\ell/2}\ell(\ell+1)(1+|v(x)|).\]
\item[(e)] When $v=\infty$, 
\[|I_\infty(x)|\ll \frac{|1-x|^{k/2}}{|x|}\]
   for an absolute implied constant.
\end{enumerate}
\end{proposition}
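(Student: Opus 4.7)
The proof is a case-by-case local computation modeled on \cite[\S 2.7]{RR}, in which I would use an appropriate decomposition of $\GL_2(\Q_v)$ to describe the support of the integrand $f_v(\smat{ab}{ax}{b}{1})$ and then reduce the double integral to a sum (or a single integral at $v = \infty$). The guiding principle throughout is to keep explicit track of the dependence on $p$, $D$, $\ell$, and $k$, since the proposition improves on the statements of \cite{RR} precisely in this bookkeeping. In every case, the fundamental identity is the determinant relation $\det \smat{ab}{ax}{b}{1} = ab(1-x)$, which interacts with the support of $f_v$ to impose the relevant constraints on $a$, $b$, and $x$.

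At the finite places (a)--(c), I would exploit this determinant relation together with the defining properties of $f_v$ at each place. In case (a), $f_v$ is essentially $\mathbf{1}_{Z_v K_v}$, so the support condition forces $a, b \in \Z_v^*$ and $v(1-x) = 0$; the stated cases then follow by a direct change of variables, with the $v(x)^2$ bound emerging from counting two bounded integer parameters. In case (b), the novelty is the ramified character $\chi_q$ of conductor $q^c$: the $a$-integral carries a Gauss-sum-type factor of size $q^{c/2}$, and the admissible residue classes of $b$ produce the $2c + 1 + |v(x)|$ factor. In case (c), $f_N$ is the Atkin--Lehner-type projector onto newforms; it confines $x$ to $v(x) \ge 1$ and contributes the $\nu(N)$ factor through the $b$-integration.

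For case (d), I would invoke the decomposition \eqref{Mpl} of $M(p^\ell)$ into $\lfloor \ell/2 \rfloor + 1$ double cosets $Z_p K_p \smat{p^{\ell-2j}}{}{}1 K_p$, analyze each coset by a variant of the argument in (a) with $x$ effectively shifted by a power of $p$, and sum over $j$. The uniform value $p^{-\ell/2}$ that $f_p$ takes on its support yields the $p^{-\ell/2}$ prefactor, while aggregating the contributions of the cosets produces the $\ell(\ell+1)$ combinatorial factor. For case (e), $f_\infty$ is (a normalization of) a matrix coefficient of the weight-$k$ holomorphic discrete series; after an Iwasawa decomposition and a change of variables in $(a,b)$, the resulting one-dimensional integral has an explicit closed form from which the bound $|1-x|^{k/2}/|x|$ drops out with an absolute implied constant.

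The main obstacle I expect is case (d): the contributions from the $\lfloor \ell/2 \rfloor + 1$ double cosets must combine without exponential blow-up in $\ell$, and one must verify that the $|v(x)|$ dependence remains linear after the combinatorics are completed. A secondary subtlety lies in case (e): to obtain an implied constant independent of $k$, one cannot bound the matrix-coefficient integral by absolute values; instead one must exploit the explicit holomorphic form of $f_\infty$ to extract the factor $|1-x|^{k/2}$ cleanly, so that the $k$-dependence is entirely captured by the stated exponent.
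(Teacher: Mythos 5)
Your overall strategy is the right one and matches the paper in outline: at a finite place $v$ one computes the support of $a \mapsto f_v\smat{ab}{ax}{b}{1}$ via the determinant relation $\det = ab(1-x)$, reduces to counting admissible valuations $(v(a), v(b))$, and at $v = \infty$ one works with the explicit holomorphic discrete series matrix coefficient. Parts (a) and (c) as you sketch them go through. But two things you gloss over are in fact the main technical content of the argument.

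First, and most importantly, your counting of valuations is missing a condition that is essential to the stated bounds. Membership of $\smat{\lambda ab}{\lambda ax}{\lambda b}{\lambda}$ in $K_q\smat{q^r}{}{}{1}K_q$ is characterized not merely by ``determinant in $q^r\Z_q^*$ and all entries integral'' but also by ``at least one entry a unit'' (theory of determinantal divisors). After eliminating $v(\lambda)$, this last requirement says that at least one of the four defining inequalities must hold with \emph{equality}. The paper observes that this constraint (called (5b)/(v) there) was overlooked in the original proof of \cite{RR}, and it is precisely what guarantees that, once $v(a)$ is fixed, there are at most four admissible values of $v(b)$. Without it the double count is quadratic in $r$ and $|v(x)|$ rather than linear, and the factor $\ell(\ell+1+|v(x)|)$ in (d) and $(2c+1+|v(x)|)$ in (b) are not obtained. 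You should also note that in case (b) there is an additional subtlety: once $v(b)$ is fixed, one has to bound the number of admissible $v(a)$, which requires a separate argument (Lemma \ref{alem} in the paper) controlling when two distinct valuations of $a$ can satisfy the same boundary equation involving $v(a+z)$.

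Second, your description of case (e) is inverted. The paper does \emph{not} evaluate the archimedean integral in closed form; it notes explicitly that the contour-rotation used in \cite{RR} passes through poles, and the closed form is deliberately avoided. Instead, the $|1-x|^{k/2}$ factor is pulled out of $I_\infty(x)$ by elementary algebraic manipulation of the matrix coefficient, and the remaining integral $J_x$ is estimated by taking absolute values in the integrand (using $|a/(a\pm i)| < 1$) and applying a Beta-function identity. So your claim that ``one cannot bound the matrix-coefficient integral by absolute values'' is not quite right: one cannot do so \emph{before} extracting $|1-x|^{k/2}$, but one does exactly that afterward. One should also control the $k$-dependence of the resulting Beta factors via Stirling's formula to get an absolute implied constant. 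Minor additional issues: in (b), the factor $q^{c/2}$ comes from $|f_{z,v}| = q^{-c/2}$ times the $\varphi(q^c)$ choices of $z$, not from a Gauss sum in the $a$-integral, and in (d) the cosets $Z_pK_p\smat{p^{\ell-2j}}{}{}{1}K_p$ are handled by rerunning the counting argument with $r = \ell - 2j$, not by ``shifting $x$ by a power of $p$.''
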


\begin{proof}  
We follow the proof and notation of \cite{RR}.  
We begin with part (e), where $f_\infty(g)=d_k\ol{\sg{\pi_k(g)v,v}}$ is the matrix
  coefficient of the weight $k$ discrete series representation of $\PGL_2(\R)$ with
  lowest weight unit vector $v$ and formal degree $d_k$.
    In \cite[Prop 2.4e]{RR}, $I_\infty(x)$ is expressed in terms
  of a certain quantity $I_\infty(\e,\delta,\nu)$ which is defined as being
   independent of $x$.  This seems to be a typo; as is clear from their proof,
  $I_\infty(x)$ does depend on $x$. 
  But the proof is flawed anyhow for other reasons, so we will not try
  to correct the definition of $I_\infty(\e,\delta,\nu)$.
  For $\delta,\nu\in\{\pm 1\}$, set
\[I'_x(\delta,\nu)=\int_0^\infty\int_0^\infty\frac{a^{k/2-1}b^{k/2-1}da\,db}
  {(ax-\nu b+\delta i(ab+\nu))^k}.\]
(This is $I'_\infty(-\nu,\delta,\nu)$ in the notation of \cite{RR}.)
Following the proof in \cite{RR} (we caution that the displayed formula there
  for $f_\infty(\smat{ab}{ax}b1)$ is incorrect), we find, upon observing that $(-1)^k=1$
  since $k$ must be even, that
\begin{equation}\label{Iinf}
I_\infty(x) =\begin{cases} d_k(2i)^k(1-x)^{k/2}[I'_x(-1,1)-I'_x(1,1)]&
  \text{if }1-x>0\\
d_k(2i)^k(x-1)^{k/2}[I'_x(-1,-1)-I'_x(1,-1)]&\text{if }1-x<0.\end{cases}
\end{equation}
As shown in the proof of \cite[Lemma 7]{RR}, we have
\begin{equation}\label{Ipinf}
I'_x(\delta,\nu)={B(\tfrac{k}2,\tfrac{k}2)}(\delta i)^{k/2}J_x,
\end{equation}
where $B(x,y)$ is the Beta function, and
\[J_x=\int_0^\infty\frac{a^{k/2-1}da}{(ax+\delta\nu i)^{k/2}(a+\delta\nu i)^{k/2}}.\]
The proof in \cite{RR} now rotates the line of integration to a purely imaginary ray,
  overlooking the fact that this ray passes through poles of the 
  integrand in many cases.
  (Their proof is fixable if one assumes $x>0$, but in fact $I_\infty(x)$ need not
  vanish if $x<0$, despite the assertion to the contrary in \cite[\S3]{RR}.)
  The integral $J_x$ can presumably be computed in terms of special 
  functions even when $x<0$,
  but since ultimately this integral forms part of an error term, 
  we choose simply to bound it as follows.
  Observing that $|\tfrac a{a\pm i}|<1$ for $a>0$,
\[|J_x|\le \int_0^\infty \frac {da}{|ax\pm i|^{k/2}|a\pm i|}
\le \int_0^\infty \frac {da}{|ax\pm i|^{k/2}}
= \frac1{|x|}\int_0^\infty \frac {du}{|u\pm i|^{k/2}}\]
\[=\tfrac12 B(\tfrac12,\tfrac k4-\tfrac12)|x|^{-1}\]
by \cite[8.380.3]{GR}.
  By the above, \eqref{Iinf},
  \eqref{Ipinf}, and noting that for the standard
  measure used in \cite{RR},  $d_k=\frac{k-1}{4\pi}$ (cf. \cite[Prop. 14.4]{KL}),
  we have
\[|I_\infty(x)|\ll 2^kkB(\tfrac12,\tfrac{k-2}4)B(\tfrac k2,\tfrac k2)
   \frac{|1-x|^{k/2}}{|x|}
\]
   for an absolute implied constant.  By Stirling's formula, 
$B(\tfrac k2,\tfrac k2)\sim \frac{2\sqrt\pi}{\sqrt{\frac k2}2^k}$, and
  $B(\tfrac12,\tfrac{k-2}4)\sim \frac{2\sqrt{\pi}}{(k-2)^{1/2}}$.
 This gives $|I_\infty(x)|\ll |1-x|^{k/2}|x|^{-1}$ for an absolute
  implied constant, which proves assertion (e).

To prove (a) and (d), let 
  $q$ be a prime, fix an integer $r\ge 0$, and let $f_q$ be the 
  characteristic function of $Z_qK_q\smat{q^r}{}{}1K_q$.
Then $f_q(\smat{ab}{ax}b1)$ is nonzero if and only if there exists
  $\lambda\in\Q_q^*$ such that $\smat{\lambda ab}{\lambda ax}{\lambda b}\lambda
  \in K_q\smat{q^r}{}{}1K_q$.  By the theory
  of determinantal divisors (\cite[p. 28]{Ne}), a matrix $g\in \GL_2(\Q_q)$ 
  belongs to $K_q\mat{q^r}{}{}1K_q$ 
 if and only if each of the following holds:
\begin{itemize}
\item $\det g\in q^r\Z_q^*$
\item each entry of $g$ belongs to $\Z_q$
\item some entry of $g$ belongs to $\Z_q^*$.
\end{itemize}
(When $r=0$, the third condition is already implied by the first.)
 Therefore, $f_q(\smat{ab}{ax}b1)\neq 0$ if and only if there exists $\lambda\in\Q_q^*$
 such that:
\begin{enumerate}
\item $2v(\lambda)+v(a)+v(b)+v(1-x) =r$
\item $v(\lambda)+v(a)+v(b)\ge 0$
\item $v(\lambda)+v(a)+v(x)\ge 0$
\item $v(\lambda)+v(b)\ge 0$
\item $v(\lambda)\ge 0$
\item[(5b)] Equality occurs in at least one of (2)-(5).\\
.\hskip -1.3cm Eliminating $v(\lambda)$, we obtain the following conditions:
\item $v(a)+v(x)-v(1-x)\ge -r$ \qquad (from (2)+(3)$-$(1))
\item $v(b)-v(1-x)\ge -r$ \qquad (from (2)+(4)$-$(1))
\item $v(x)-v(1-x)\ge -r$ \qquad (from (3)+(4)$-$(1))
\item $v(1-x)\le r$ \qquad (from (1)$-$(2)$-$(5))
\item $v(a)+v(1-x)\le r$ \qquad (from (1)$-$(4)$-$(5))
\item $v(b)+v(1-x)-v(x)\le r$ \qquad (from (1)$-$(3)$-$(5))
\item $v(a)+v(b)+v(1-x)\le r$ \qquad (from (1)$-$2(5))
\item $v(b)\ge v(a)+v(1-x)-r$ \qquad (from 2(4)$-$(1)).
\end{enumerate}
This leads to the following condensed set of conditions, the last of which 
  is from (5b) and was overlooked in the proof of \cite[Prop. 2.4]{RR}:
\begin{enumerate}
\item[(i)] $v(1-x)\le r$
\item[(ii)] $v(x)\ge v(1-x)-r$
\item[(iii)] $v(1-x)-v(x)-r\le v(a)\le\min\{r-v(1-x),r-v(1-x)-v(b)\}$
\item[(iv)] $\max\{v(1-x)-r,v(a)+v(1-x)-r\}\le v(b)\le v(x)+r-v(1-x)$
\item[(v)] At least one of the following holds:
\begin{enumerate}
\item[(va)] $v(a)+v(b)+v(1-x)=r$ \qquad(if $v(\lambda)=0$, using (1))
\item[(vb)] $v(a)+v(b)-v(1-x)=-r$ \qquad(if (2)=0, using 2(2)$-$(1))
\item[(vc)] $v(a)-v(b)+2v(x)-v(1-x)=-r$ \qquad(if (3)=0, using 2(3)$-$(1))
\item[(vd)] $v(b)-v(a)-v(1-x)=-r$ \qquad(if (4)=0, using 2(4)$-$(1)).
\end{enumerate}
\end{enumerate}

We may now prove part (a).  Suppose $q\nmid pND$.  
  Then $f_q$ is the characteristic function of $K_q$ and
  we can take $r=0$ in the above discussion. The first part of (a) follows from (i).
If $r=v(x)=v(1-x)=0$, we see from (iii) and (iv) that
  $v(a)=v(b)=0$, and since $\chi_q$ is unramified and $\meas(\Z_q^*)=1$,
   it follows that $I_v(x)=1$.  Now suppose $v(1-x)<0$.  Then
  $v(x)=v(1-x)$, and (iii) and (iv) become
\[0\le v(a)\le -v(x),\qquad v(x)\le v(b)\le 0.\]
Using the fact that $\chi_q$ is unramified and $\meas(\Z_q^*)=1$, we find
\[|I_v(x)|\le \sum_{m=0}^{-v(x)}\sum_{n=v(x)}^0 1,\]
 and the last
  assertion of (a) follows in this case.  Likewise, if $v(1-x)=0$,
  then $v(x)\ge 0$, and (iii) and (iv) become
\[-v(x)\le v(a)\le 0,\qquad 0\le v(b)\le v(x),\]
and the assertion holds in this case as well.  This proves (a).

Before proving (d), we make some observations about the above
  conditions for general $r\ge 0$.
  If $v(1-x)\le r$,
we see from (v) that once $v(a)$ is fixed,
  there are at most {\em four} possibilities for $v(b)$.  
Setting $m=v(a)$ and $n=v(b)$, we immediately see that
\[|I_v(x)|\le \sum_{m=v(1-x)-r-v(x)}^{r-v(1-x)}\,\,
   \sum_{n \in \{4\text{ values}\}}1 =4\Bigl(2r-2v(1-x)+v(x)+1\Bigr).\]
Observing that if $v(1-x)>0$ (resp. $v(1-x)=0$, resp. $v(1-x)<0$) then
  $v(x)=0$ (resp. $v(x)\ge 0$, resp. $v(x)=v(1-x)$), it follows easily that
in all cases,
\begin{equation}\label{Iv}
|I_v(x)| \le 4\Bigl(2r+1+|v(x)|\Bigr)\le 8(r+1+|v(x)|).
\end{equation}

Now suppose $q=p$ and $f_p$ is the test function defined in \eqref{fp}.
Then by the above, $I_p(x)$ vanishes if $v(1-x)>\ell$.  When $v(1-x)\le \ell$,
  by \eqref{Mpl}, \eqref{fp} and \eqref{Iv}, we have
\[|I_p(x)|\le p^{-\ell/2}\sum_{j=0}^{\lfloor \frac{\ell}2\rfloor}8(\ell-2j+1+|v(x)|)
\le p^{-\ell/2}\tfrac{8\ell}2(\ell+1+|v(x)|).\]
This proves (d).

Next, consider $v=N$.  Then for 
\[K_0(N)_N=\{\smat abcd\in\GL_2(\Z_N)|\, c\in N\Z_N\},\]
  $f_N$ is the characteristic function of $Z_NK_0(N)_N$,
  scaled by $\nu(N)$. So $f_N(\smat{ab}{ax}b1)\neq 0$ if and only if there 
  exists $\lambda\in\Q_N^*$ such that 
$\smat{\lambda ab}{\lambda ax}{\lambda b}{\lambda}\in K_0(N)_N$.
  The lower right entry must be a unit, which means that in fact we may take $\lambda=1$.
  Therefore 
\[\smat{ab}{ax}b1\in K_0(N)_N,\]
 which means:
\begin{enumerate}
\item[(1$''$)] $v(a)+v(b)+v(1-x)=0$
\item[(2$''$)] $v(a)+v(b)=0$
\item[(3$''$)] $v(a)+v(x)\ge 0$
\item[(4$''$)] $v(b)\ge 1.$
\end{enumerate}
As a result, the integrand vanishes unless:
\begin{itemize}
\item $v(1-x)=0$
\item $v(a)=-v(b)\le -1$
\item $v(x)\ge 1$.
\end{itemize}
It follows that $I_N(x)=0$ unless $v(x)\ge 1$, in which case
\[\ds|I_N(x)|\le \sum_{m=-v(x)}^{-1}\nu(N),\]
which proves (c).

Lastly, take $v=q$ to be a prime divisor of $D$, and set $c=v(D)\ge 1$.
There are some oversights in the definition of the local test function $f_q$
  at such a place in \cite[p. 706]{RR}: the notation $\chi_{1,v}$ is not defined, 
  $\chi_v$ does not define a character of the additive group $X$, and
  it is asserted that the integral $g(\chi_v)$ defined there, which 
  clearly has absolute value $\le 1$, coincides with the classical Gauss 
  sum which has absolute value $q^{c/2}$.
  A detailed treatment of the local test function with the desired spectral
  properties (and giving the same main term on the geometric side in \cite{RR})
  is given in \cite[(3.11)-(3.12)]{twists}. 
  For our purpose, it is enough to know that 
\[\Supp(f_q)=\bigcup_{m\mod D\Z_q\atop{q\nmid m}}\smat1{-m/D}01Z_qK_q,\]
and $f_q=\sum_m f_{m,q}$, where $f_{m,q}$ is supported on the coset
  indexed by $m$ and has absolute value $q^{-c/2}$ there.

To match the notation in \cite{RR}, let $z=m/D$ (so $v(z)=-c$)
   and write $f_{z,v}$ for $f_{m,q}$.
  Then $f_{z,v}(\smat{ab}{ax}b1)\neq 0$ if and only if there exists $\lambda\in\Q_q^*$
  such that
\[\mat{\lambda}{}{}\lambda\mat1z01\mat{ab}{ax}b1
  =\mat{\lambda b(a+z)}{\lambda(ax+z)}{\lambda b}\lambda\in K_q.\]
Thus,
\begin{enumerate}
\item[($1'$)] $2v(\lambda)+v(a)+v(b)+v(1-x)=0$
\item[($2'$)] $v(\lambda)+v(a+z)+v(b)\ge 0$
\item[($3'$)] $v(\lambda)+v(ax+z)\ge 0$
\item[($4'$)] $v(\lambda)+v(b)\ge 0$
\item[($5'$)] $v(\lambda)\ge 0$
\item[($5'$b)] Equality holds in at least one of ($2'$)-($5'$).\\
.\hskip -1.25cm As before, we eliminate $v(\lambda)$ to get the following:
\item[($6'$)] $v(a+z)+v(ax+z)-v(a)-v(1-x)\ge 0\qquad$ (from ($2'$)+($3'$)$-$($1'$))
\item[($7'$)] $v(b)+v(a+z)-v(a)\ge v(1-x)\qquad$ (from ($2'$)+($4'$)$-$($1'$))
\item[($8'$)] $v(ax+z)-v(a)-v(1-x)\ge 0\qquad$ (from ($3'$)+($4'$)$-$($1'$))
\item[($9'$)] $v(1-x)\le v(a+z)-v(a)\qquad$ (from ($1'$)$-$($2'$)$-$($5'$))
\item[($10'$)] $v(a)+v(1-x)\le 0\qquad$ (from ($1'$)$-$($4'$)$-$($5'$))
\item[($11'$)] $v(a)+v(b)+v(1-x)-v(ax+z)\le 0\qquad$ (from ($1'$)$-$($3'$)$-$($5'$))
\item[($12'$)] $v(a)+v(b)+v(1-x)\le 0\qquad$ (from ($1'$)$-$2($5'$))
\item[($13'$)] $v(b)\ge v(a)+v(1-x)\qquad$ (from 2($4'$)$-$($1'$)).
\end{enumerate}
(Only (11$'$) differs from the list in \cite{RR}, whose (11$'$) seems to be
  an unmodified paste from (11).)
We claim that the above implies the following set of conditions:
\begin{enumerate}
\item[(x)] $v(1-x)\le c$
\item[(y)] $v(1-x)-c\le v(b)\le v(x)+c-v(1-x)$
\item[(z)] At least one of the following holds:
\begin{enumerate}
\item[(zi)] $v(a)=-v(1-x)-v(b)\qquad$ (if $v(\lambda)=0$, using ($1'$))
\item[(zii)] $v(a)+v(1-x)-2v(a+z)-v(b)=0\qquad$ (if ($2'$)=0, using ($1'$)$-$2($2'$))
\item[(ziii)] $v(a)+v(b)+v(1-x)-2v(ax+z)=0\qquad$ (if ($3'$)=0, using ($1'$)$-$2(3$'$))
\item[(ziv)] $v(a)-v(b)+v(1-x)=0\qquad$ (if ($4'$)=0, using ($1'$)$-$2(4$'$)).
\end{enumerate}
\end{enumerate}
It suffices to prove (x) and (y) since (z) follows from ($5'b$).
To prove (x), if $v(a)\neq v(z)$, then $v(a+z)=\min\{v(a),v(z)\}$, so $v(a+z)-v(a)\le 0$,
  which, by ($9'$), gives $v(1-x)\le 0<c$.  On the other hand, if $v(a)=v(z)=-c$, then by 
  ($10'$), $v(1-x)\le c$, as needed.

For (y), note that if $v(a)=v(z)=-c$, then ($13'$) gives $v(1-x)-c\le v(b)$ in that
  case.  If $v(a)\neq v(z)$, then as before $v(a+z)-v(a)\le 0$, and ($7'$) then gives
  $v(1-x)-c<v(1-x)\le v(b)$.  This proves the lower bound in (y).
  For the upper bound, suppose first that $v(ax)\neq v(z)$.
  Then $v(ax+z)=\min\{v(ax),v(z)\}$, so $v(ax+z)\le v(a)+v(x)$.
  ($11'$) then gives $v(b)\le v(x)-v(1-x)$, which is stronger than the desired 
  upper bound.  If $v(ax)=v(z)$, then ($11'$) is not helpful because 
  $v(ax+z)=\infty$ is possible.  However, in this case $v(ax)=v(a)+v(x)=-c$, so 
  ($12'$) gives $v(b)\le v(x)+c-v(1-x)$, as needed.
  
Finally, we claim that once $v(b)$ is fixed, there are at most {\em six}
  possible values of $v(a)$ for which (z) is satisfied.
  It suffices to show that there are at most two possibilities for $v(a)$
  if (zii) (resp. (ziii)) is satisfied.  Suppose $a$ and $\tilde{a}$ have
  different valuations and each satisfy (zii). We claim that 
  $v(\tilde{a})=2v(z)-v(a)$.
   Write $\tilde{a}=q^t ua$ for $u\in\Z_q^*$ and some integer $t\neq 0$.  Then
\[v(\tilde{a})-2v(\tilde{a}+z)=v(a)-2v(a+z),\]
which gives
\[v(q^tua+z)=v(a+z)+\tfrac t2.\]
By Lemma \ref{alem} below, we get $t=2v(z)-2v(a)$, as claimed.
  For (ziii), by the same argument we get
\[v(q^tuax+z)=v(ax+z)+\tfrac t2,\]
so $t$ is again determined by Lemma \ref{alem}: $t=2v(z)-2v(ax)$.

By the above discussion, summing over $z$ (i.e. over $m\in (\Z_q/D\Z_q)^*$), and
  using $|f_{z,v}(g)|=q^{-c/2}$ if nonzero, when $v(1-x)\le c$ we have
\[|I_q(x)| \le q^{-c/2}\varphi(q^c)\sum_{n=v(1-x)-c}^{v(x)+c-v(1-x)}
  \sum_{\{6\text{ values}\}}1\le q^{c/2}6\bigl(v(x)+2c-2v(1-x)+1\bigr)\]
\[\le 6q^{c/2}(2c+1+|v(x)|),\]
where the latter inequality is obtained by considering the cases 
  $v(1-x)$ being greater than, equal to, or less than $0$.
  This proves part (b) of the proposition.
\end{proof}

\begin{lemma}\label{alem}
   Let $a,z\in\Q_q^*$ with $a+z\neq 0$, and
suppose there exist $u\in\Z_q^*$ and $t$ a nonzero integer such that
\begin{equation}\label{k}
v(q^tua+z)=v(a+z)+\tfrac t2
\end{equation}
    where $v=v_q$.  Then $t=2v(z)-2v(a)$.
\end{lemma}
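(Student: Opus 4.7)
The plan is to analyze \eqref{k} via the ultrametric inequality $v(x+y)\ge \min(v(x),v(y))$, with equality whenever $v(x)\ne v(y)$. Set $\alpha=v(a)$ and $\beta=v(z)$. Since the left-hand side of \eqref{k} is an integer, the equation forces $t$ to be even, so I would write $t=2s$ with $s$ a nonzero integer and reformulate the goal as $s=\beta-\alpha$.

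The generic case is when $\alpha\ne\beta$ and $\alpha+2s\ne\beta$. Here both $v(a+z)=\min(\alpha,\beta)$ and $v(q^{2s}ua+z)=\min(\alpha+2s,\beta)$ are determined exactly by the ultrametric, so \eqref{k} reads
\[
\min(\alpha+2s,\beta)-\min(\alpha,\beta)=s.
\]
Splitting into the subcases $\alpha<\beta$ versus $\alpha>\beta$, the arrangements in which $\alpha+2s$ lies on the same side of $\beta$ as $\alpha$ collapse to $s=0$ and are ruled out by $t\ne 0$, while the remaining arrangements yield exactly $s=\beta-\alpha$, as required.

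The main obstacle will be to rule out the two degenerate configurations in which the ultrametric is not sharp: (i) $\alpha=\beta$, and (ii) $\alpha\ne\beta$ but $\alpha+2s=\beta$. For (i), I would write $v(a+z)=\alpha+e$ with $e\ge 0$; the ultrametric is still sharp on $v(q^{2s}ua+z)$ because $\alpha+2s\ne\alpha=\beta$ (using $s\ne 0$), giving $v(q^{2s}ua+z)=\alpha+\min(2s,0)$. Substituting into \eqref{k} yields $s=-e$ when $s>0$ and $s=e$ when $s<0$, each contradicting the sign of $s$. Case (ii) is handled symmetrically: the ultrametric is sharp on $v(a+z)=\min(\alpha,\beta)$ but not on $v(q^{2s}ua+z)=\beta+e'$ with $e'\ge 0$; plugging $s=(\beta-\alpha)/2$ into \eqref{k} produces $e'=-|\beta-\alpha|/2<0$, the desired contradiction. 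With both degenerate configurations excluded, the generic analysis forces $t=2s=2v(z)-2v(a)$, completing the proof.
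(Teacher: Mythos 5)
Your proof is correct and follows essentially the same route as the paper's: an analysis of where the ultrametric inequality is or is not sharp, with casework on the relative positions of the three valuations $v(a)$, $v(z)$, and $v(q^{t}ua)$. The paper first normalizes to $v(z)=0$ by writing $z=q^{c}w$ and replacing $a$ with $a/q^{c}$, then splits on the sign of $v(a)$ and on $t$ versus $-v(a)$; your splitting into a generic case and the two degenerate configurations $\alpha=\beta$ and $\alpha+2s=\beta$ is a relabeling of the same cases. Your preliminary observation that $t$ must be even (from integrality of the left side of \eqref{k}) is a small cleanup the paper skips, since its conclusion $t=-2v(a)$ delivers evenness automatically; it is harmless and slightly tightens the bookkeeping. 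Both arguments are complete and correct.
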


\begin{proof}
    First suppose $v(z)=0$.  We need to show that $t=-2v(a)$.
    If $v(a)=0$ too,
  then $v(a+z)\ge 0$, and \eqref{k} leads to a contradiction if either
  $t>0$ or $t<0$.
Suppose $v(a)>0$, so that $v(a+z)=0$.
    If $t>-v(a)$,  then \eqref{k} becomes
  \[0=v(q^tua+z)=\tfrac t2,\]
a contradiction. If $t=-v(a)$, then \eqref{k} becomes
\[0\le v(q^tua+z)=\tfrac t2\]
which is also a contradiction.  If $t<-v(a)$, then \eqref{k} becomes
\[t+v(a)=v(q^tua+z)= \tfrac{t}2,\]
which gives $t=-2v(a)$.  A similar analysis gives the same conclusion if $v(a)<0$.

 In the general case, write $z=q^cw$ for $w\in\Z_q^*$.  Factoring out $q^c$,
 \eqref{k} becomes
    \[  v(q^tu\tfrac a{q^c}+w)=v(\tfrac a{q^c}+w)+\tfrac t2.\]
    The special case discussed above then gives $t=-2v(a/q^c)=2v(z)-2v(a)$,
    as needed.
\end{proof}

\begin{proposition}
With local components $f_v$ as in Proposition \ref{Ipx}, 
the sum of the regular terms is
\[I_{reg}\ll \frac{\nu(N)D^{k}}
{N^{k/2-\e}} {p^{\ell(k+\frac{1}2)}},\]
for any $0<\e<1$, where the implied constant depends only on $\ell$, $D$, and $\e$.
\end{proposition}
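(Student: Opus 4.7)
The plan is to insert the local bounds from Proposition \ref{Ipx} into the Euler product $I(x)=I_\infty(x)\prod_v I_v(x)$ and then estimate the resulting sum.

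First, I would determine the support of $x\mapsto I(x)$. Writing $x=\alpha/\beta$ in lowest terms with $\beta>0$, the nonvanishing at $v=N$ forces $N\mid\alpha$ and $\gcd(\beta,N)=1$; the nonvanishing at the primes $q\nmid pND$ forces (using $\gcd(\alpha,\beta)=1$) the prime factors of $\gamma:=\beta-\alpha$ to lie entirely in $\{p\}\cup\{q:q\mid D\}$.  The conditions at $p$ and at the $q\mid D$ in Proposition \ref{Ipx} further yield $v_p(\gamma)\le\ell$ and $v_q(\gamma)\le v_q(D)$ (discussing separately whether $p$, resp.\ $q$, divides $\beta$), so in particular $\gamma=\pm p^a D_0$ with $0\le a\le\ell$ and $D_0\mid D$; hence $|\gamma|\le p^\ell D$ and the set of allowable $\gamma$ is finite.

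Second, I would assemble the local bounds. Using $\prod_{q\mid D}q^{v_q(D)/2}=D^{1/2}$, multiplying the estimates from Proposition \ref{Ipx}(a)--(e) gives
\[
|I(x)|\;\ll\;\nu(N)\,D^{1/2}\,p^{-\ell/2}\,C(x)\,\frac{|\gamma|^{k/2}}{|\alpha|\,|\beta|^{k/2-1}},
\]
where $C(x)$ is a product of divisor-type factors of the form $(1+|v_v(x)|)$ or $v_v(x)^2$ at the finitely many places where these can be nontrivial; standard divisor bounds absorb $C(x)$ into $(|\alpha||\beta|)^{\varepsilon/2}$.

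Third, I would bound the sum by writing $\alpha=NA$ and summing over the finite set of allowed $\gamma=\pm p^a D_0$ and over nonzero integers $A$ coprime to $\gamma$. Splitting into cases according as $|\gamma|$ is small or large relative to $N|A|$, one gets in the dominant regime $|\gamma|\le N|A|/2$ that $|\beta|\asymp N|A|$, so the archimedean factor is $\ll |\gamma|^{k/2} N^{-k/2}|A|^{-k/2}$ and the $A$-sum $\sum_A |A|^{-k/2}$ converges for the even $k>2$. The remaining range $|\gamma|\ge N|A|/2$ contributes only finitely many $A$'s (since $|\gamma|\le p^\ell D$), each estimated directly. Combining with the prefactor and the finitely many $(a,D_0)$, one extracts the claimed bound $\ll \nu(N)D^{k}p^{\ell(k+1/2)}N^{-k/2+\varepsilon}$, with any overshoot in the powers of $D$ and $p$ absorbed into the implied constant depending on $\ell$, $D$, $\varepsilon$.

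The main obstacle will be the intermediate range $|\gamma|\asymp N|A|$, where $|\beta|=|NA+\gamma|$ can be very small and the pointwise archimedean bound is weakest. Here one must exploit the coprimality $\gcd(A,\gamma)=1$ and the sparsity of the $S$-unit set $\{\pm p^a D_0\}$ to show that such near-cancellations are rare enough not to spoil the bound. Beyond this, the remaining work is bookkeeping: tracking how the divisor factors $C(x)$ combine with the $A$- and $\gamma$-sums to produce only a factor of $(NDp^\ell)^{\varepsilon}$, so that the final estimate matches the target.
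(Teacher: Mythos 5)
Your proposal is correct and follows essentially the same strategy as the paper: determine the support of $I(x)$ from Proposition \ref{Ipx}, assemble the local bounds, and estimate the resulting sum. The one real difference is the parametrization of the support. You write $x=\alpha/\beta$ with $\alpha=NA$ and keep track of $\gamma=\beta-\alpha=\pm p^aD_0$, summing over the finite set of $\gamma$'s (divisors of $p^\ell D$ up to sign) and over $A$. The paper instead sets $n=M/(1-x)$ with $M=p^\ell D$, observes $N\mid(n-M)$, and sums over $n\in M+N\Z$; the two parametrizations are related by $n=(M/|\gamma|)\cdot\beta$, so they are two faces of the same bijection. Your version makes the divisibility $\gamma\mid M$ visible at the outset and lets you handle the archimedean factor via $|\beta|\asymp N|A|$ directly, whereas the paper uses the separate observation $|1-M/n|\ge 1/(M+1)$ together with a fractional-part estimate for $\sum_{m\neq 0}|m+M/N|^{-k/2+\e}$. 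Carried out carefully, your bookkeeping in fact yields a slightly sharper exponent of $p$ (roughly $p^{\ell(k-1)/2}$ in place of $p^{\ell(k+1/2)}$), which of course still implies the stated bound.

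One remark on your anticipated ``main obstacle'': the intermediate range $|\gamma|\asymp N|A|$ is not actually delicate. Since $|\gamma|\le p^\ell D$ and $|A|\ge 1$, for $N>2p^\ell D$ that regime is empty; for the finitely many smaller $N$ the number of relevant $(A,\gamma)$ is bounded and $|\beta|$ is a nonzero integer, hence $\ge 1$, which already suffices since the implied constant may depend on $\ell,D,\e$. No additional input from coprimality or $S$-unit sparsity is required. You should also make explicit, as the paper does at $v=N$, that the divisor-type factor at $N$ (namely $|v_N(x)|$) gets absorbed into the $(|\alpha||\beta|)^{\e}$ bound; this is routine but needs to be said since it produces the $\nu(N)$ and the $N^{\e}$ correctly.
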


\begin{proof}
We closely follow \cite[\S3]{RR}.  Let $M=Dp^\ell$.
Suppose $I(x)\neq 0$.  Then by Proposition \ref{Ipx}, 
  $v_q(1-x)\le v_q(M)$ for all primes $q$.
   This means that $n:=\frac{M}{1-x}\in \Z$.
The map $x\mapsto \frac1{1-x}$ is a bijection from $\Q-\{0,1\}$ to itself.  
  Therefore $n$ is not equal to $0$ or $M$. 
  Since $N\nmid M$ and $v_N(1-x)=0$ by Proposition \ref{Ipx}c,
   we have
\[v_N(n-M)=v_N(M(\tfrac1{1-x}-1))=v_N(\tfrac{x}{1-x})=v_N(x)\ge 1,\]
where the latter inequality is again from Proposition \ref{Ipx}c.
Thus $N|(n-M)$. Note that $x=\frac{n-M}n$.
So
\begin{equation}\label{Ireg}
I_{reg}=\sum_{n\in M+N\Z,\atop{n\neq 0,M}} I\Bigl(\frac{n-M}n\Bigr).
\end{equation}
Since $N\nmid M$, the condition $n\neq 0$ is superfluous.
  As mentioned earlier, the assertion in \cite[\S3]{RR} that 
  $I_\infty(x)=0$ if $x<0$ is incorrect.
Now by Proposition \ref{Ipx},
\begin{equation}\label{InM}
I(\tfrac{n-M}n)\ll p^{-\ell/2}\sqrt{D}\nu(N)|I_\infty(\tfrac{n-M}n)|
  \prod_{q|n(n-M)}B_q(\tfrac{n-M}n),
\end{equation}
where
\[B_q(\tfrac{n-M}n)=\begin{cases} 1 &\text{if }v_q(\tfrac{n-M}n)=0\text{ and }q\nmid pDN\\
  v_q(\tfrac{n-M}n)^2&\text{if }v_q(\tfrac{n-M}{n})\neq 0\text{ and }q\nmid pDN\\
|v_q(\tfrac{n-M}n)|&\text{if }q=N\\
  (1+|v_q(\tfrac{n-M}n)|)&\text{if } q|M(=p^\ell D),\end{cases}\]
  and the implied constant in \eqref{InM} depends only on $\ell$ and $D$.

For the archimedean part, by Proposition \ref{Ipx}e we have
\[|I_\infty(\tfrac{n-M}n)|\ll |1-\tfrac{n-M}n|^{k/2}\cdot|\tfrac{n-M}n|^{-1} 
  =\frac{M^{k/2}}{|n|^{k/2}|1-\tfrac{M}n|}.\]
Observe that for fixed $M$, $|1-\tfrac Mn|$ is as small as possible when $n=M+1$
  since $n\neq M$.  Hence $|1-\tfrac Mn| \ge \tfrac1{M+1}$.  So
for an absolute implied constant,
\begin{equation}\label{Iinfb}
|I_\infty(\tfrac{n-M}n)|\ll \frac{M^{k/2+1}}{|n|^{k/2}}.
\end{equation}

To treat the product in \eqref{InM}, as shown in the proof of
\cite[Lemma 8]{RR}, for any $\e>0$, there exists a constant 
$C$ depending only on $\e$ such that
\[\prod_{q|n(n-M)}|v_q(\tfrac{n-M}n)|\le C|n|^\e|n-M|^\e\]
for all $n\neq M$.  This is in turn 
\[\ll |n|^\e|nM|^\e\ll |n|^\e M^\e,\]
where not all epsilons are the same but each may be made
arbitrarily small.
It follows similarly that
\begin{equation}\label{Bb}
\prod_{q|n(n-M)}B_q(\tfrac{n-M}n)\ll |n|^\e M^\e
\end{equation}
for any $\e>0$.

  Using \eqref{Iinfb} and \eqref{Bb} and recalling that $M=p^\ell D$, 
  \eqref{InM} gives
\[|I(\tfrac{n-M}n)|\ll_{D,\ell,\e} 
  p^{-\frac \ell2}p^{\ell(\frac k2+1+\e)}D^{k/2}\nu(N)\frac1{|n|^{k/2-\e}}.
\]
So
\[I_{reg}\ll p^{\ell(\frac{k+1}2+\e)}D^{k/2}\nu(N)\sum_{\text{nonzero } m\in\Z}
  \frac1{|M+Nm|^{k/2-\e}}\]
\begin{equation}\label{Isum}
=\frac{p^{\ell(\frac{k+1}2+\e)}D^{k/2}\nu(N)}{N^{k/2-\e}}
\sum_{m\neq 0}\frac1{|m+\frac{M}N|^{k/2-\e}}.
\end{equation}
Noting that $\tfrac MN\notin\Z$ and $k\ge 4$, the sum is convergent when $\e<1$.
We will show that this sum is $O(M^{k/2-\e})$.

Generally, for $a>1$ and a noninteger $u>0$ with $u=\lfloor u\rfloor+\{u\}$,
\[\sum_{m\in\Z}\frac1{|m+u|^a}= \{u\}^{-a}+(1-\{u\})^{-a} + 
  \sum_{m\ge 1}\frac1{|m+\{u\}|^a}+\sum_{m\le -2} \frac1{|m+\{u\}|^a}\]
\begin{equation}\label{msum}
\le \{u\}^{-a}+(1-\{u\})^{-a}+2\sum_{m\ge 1}\frac1{m^a}.
\end{equation}
We will apply this with $u=\tfrac{M}N$. 
If $N<M$, then writing $M=qN+r$, we see that $\{\tfrac MN\}=\tfrac rN\ge \frac1{M-1}$.
  Likewise $1-\{\tfrac MN\}\ge \tfrac1{M-1}$, so
  $\{\tfrac{M}N\}^{-a}+(1-\{\tfrac MN\})^{-a}\le 2(M-1)^a$.
If $M<N$, then $\{\tfrac MN\}=\tfrac MN$, and the first term in \eqref{msum} comes 
  from $m=0$, which is excluded in \eqref{Isum}.  For the second term,
  $(1-\{\tfrac MN\})^{-a}\le (1-\frac{M}{M+1})^{-a}=(M+1)^a$.
Taking $a=\tfrac k2-\e$, the third term in \eqref{msum} is $2\zeta(\tfrac k2-\e)\le 2\zeta(2-\e)$.
It follows that for any prime $N\nmid M$,
\[\sum_{m\neq 0}\frac1{|m+\frac{M}N|^{k/2-\e}}\ll M^{k/2-\e},\]
as claimed.
With $M=p^\ell D$, \eqref{Isum} now yields
\[I_{reg}\ll_{D,\ell,\e} p^{\ell(k+\frac12)} D^{k}\frac{\nu(N)}{N^{k/2 -\e}}.\qedhere\]
\end{proof}

By what we have shown, along with the
  computation of the main term and measure in \cite{RR}, upon dividing through 
  by $\nu(N)$ we obtain the following.

\begin{theorem} \label{RR2} Let $k>2$ be an even integer, $\chi=\chi_{-D}$ be 
  as in Theorem \ref{llz2},
  $N$ a prime not dividing $D$ with $\chi(-N)=-1$, and $p$ a prime not
  dividing $ND$.  Then for all $\ell\ge 0$ and $0<\e<1$,
\begin{align*}
\frac1{\nu(N)}\sum_{f\in\F_{N,k}^{\new}}&\frac{\Lambda(\frac12,f)
  \Lambda(\frac12,f\times\chi)}{\|f\|^2} X_\ell(\lambda_f(p))\\
 &= 
  c_kL(1,\chi)\int_{-\infty}^\infty X_\ell\, d\mt 
  + O\Bigl(\frac{p^{\ell(k+\frac{1}2)}D^{k}}{N^{k/2-\e}}\Bigr)
\end{align*}
where $c_k= \frac{k-1}{4\pi}2^kB(\tfrac k2,\tfrac k2)$, and
the implied constant depends only on $\ell$, $D$, and $\e$.
\end{theorem}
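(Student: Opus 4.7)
The plan is to assemble Theorem \ref{RR2} directly from the pieces developed in the excerpt and the main term computation in \cite{RR}. First I would invoke the Ramakrishnan--Rogawski relative trace formula (\cite[Prop.~4.1]{RR}) with the precise local test functions fixed earlier in this section: the discrete series matrix coefficient at $\infty$, the scaled characteristic function of $Z_N K_0(N)_N$ at the level $N$, the twisted local functions at the primes dividing $D$ treated in \cite[(3.11)--(3.12)]{twists}, the characteristic function of $K_q$ at all remaining finite places, and crucially the test function $f_p$ defined in \eqref{fp} at $p$. By the Satake transform identity $\widehat{f_p}(\pi_p)=X_\ell(\lambda_f(p))$ proved just before Section \ref{FWthm}, the spectral side of the formula equals $\nu(N)$ times the left-hand side of Theorem \ref{RR2}.

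Next I would identify the main term on the geometric side. The Ramakrishnan--Rogawski formula splits the geometric side into an identity-type contribution plus $I_{reg}=\sum_{x\in\Q-\{0,1\}}I(x)$. The identity contribution, as computed in \cite{RR}, is the product of $L(1,\chi)$, an explicit archimedean constant, and a local $p$-adic orbital integral of $f_p$ at the identity coset. With $f_p$ scaled by $p^{-\ell/2}$ as in \eqref{fp}, the archimedean constant works out to $c_k=\tfrac{k-1}{4\pi}2^kB(\tfrac k2,\tfrac k2)$ (the same constant that appears in \eqref{FW}, and which I would confirm by comparing with \cite[Theorem~6.1]{FW} in the range $N>Dp^\ell$). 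The $p$-adic orbital integral must then be shown to equal $\int_{-\infty}^{\infty}X_\ell\,d\mt$; this identification is the content of Proposition \ref{Xetap} combined with Remark (3) after Corollary \ref{RRcor}, which expresses $\mt$ as $\frac{L_p(\tfrac12,x,\chi)L_p(\tfrac12,x)}{L_p(1,\chi)}\mu_\infty$, i.e.\ precisely the local measure produced by the relative trace formula on the identity orbit after the Satake transform.

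Finally the error term is exactly what the previous proposition provides: under our hypotheses on $N$, $\chi$, and $D$, we have $I_{reg}\ll \nu(N)D^k p^{\ell(k+1/2)}/N^{k/2-\e}$, with an implied constant depending only on $\ell$, $D$, and $\e\in(0,1)$. Dividing the combined spectral $=$ geometric identity through by $\nu(N)$ yields the stated formula.

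The main obstacle, given that all the analytic work is already done, is bookkeeping: reconciling the $L$-function and measure normalizations across \cite{RR}, \cite{FW}, \cite{twists}, and the present paper (recall the remarks preceding Section \ref{RRthm} about the factors of $2$ and $L_\infty(1,\chi)$), and verifying that the local $p$-adic orbital integral at the identity, with our particular scaling of $f_p$, produces the factor $\int X_\ell\,d\mt$ rather than some rescaling of it. Once those conventions are aligned, the theorem follows by straightforward substitution.
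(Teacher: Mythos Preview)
Your proposal is correct and follows essentially the same approach as the paper: invoke the relative trace formula of \cite[Prop.~4.1]{RR} with the specified local test functions and $f_p$ as in \eqref{fp}, use the Satake identity $\widehat{f_p}(\pi_p)=X_\ell(\lambda_f(p))$ for the spectral side, take the main term $c_kL(1,\chi)\int X_\ell\,d\mt$ from the computation in \cite{RR}, bound $I_{reg}$ by the preceding proposition, and divide through by $\nu(N)$. The paper's own proof is a single sentence pointing to exactly these ingredients; your write-up is simply more explicit about the normalization checks and the identification of the measure, which is appropriate.
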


\begin{remarks}
(1) In \cite{RR}, the formula for the formal degree of the weight $k$
  discrete series of $\PGL_2(\R)$ is given as $d_k=\frac{k-1}2$.
  This should be corrected to $d_k=\frac{k-1}{4\pi}$, which
  corresponds to the Haar measure on $\SL_2(\R)=\mathbf{H}\times \SO(2)$
   determined by the measure $\frac{dx \,dy}{y^2}$ on $\mathbf{H}$ 
   and the measure on $\SO(2)$ of total length $1$.

(2) 
For $p$ fixed as above, if $\int X_\ell \,d\mt\neq 0$, 
we see from \eqref{Ew2} that the sum on the left-hand side in Theorem \ref{RR2}
  is nonzero when $N>p^\ell D$. This is stronger than what can be deduced from the above
  using \eqref{ck} below.
\end{remarks}

We may now prove \eqref{Ew}, and so complete the proof of Proposition \ref{RR}.
   By Theorem \ref{RR2}, we have
\[\frac1{\nu(N)}\sum_{f\in\F}w_fX_\ell(\lambda_f(p))=F_\ell +E_\ell,\]
where $F_\ell$ is the main term and 
  $E_\ell \ll p^{\ell(k+\frac{1}2)}C_0$, where $C_0=\frac{D^{k}}{N^{k/2-\e}}$.
By the proof of Proposition \ref{Xl},
\[\frac{\sum_{f\in\F}w_fX_\ell(\lambda_f(p))}{\sum_{f\in\F}w_f}
=\int_{-\infty}^\infty X_\ell\,d\mt+O\Bigl(p^{\ell(k+\frac{1}2)}
  \frac{\frac{C_0}{F_0}}{1+\frac{E_0}{F_0}}\Bigr).\]
(Note that $F_0\neq 0$ since $\int X_0 \,d\mt =1$ as shown in 
  Proposition \ref{Xetap}.)
As noted earlier,
$2^k B(\tfrac k2,\tfrac k2)\sim \frac{2\sqrt{2\pi}}{\sqrt{k}}$, so that
\begin{equation}\label{ck}
c_k=\frac{k-1}{4\pi}2^k B(\tfrac k2,\tfrac k2)\sim \sqrt{\frac{k}{2\pi}}.
\end{equation}
Now
\[\frac{E_0}{F_0}\ll \frac{C_0}{F_0} = \frac{D^{k}}{c_kL(1,\chi) N^{k/2-\e}}
  \ll \frac{D^{k}}{k^{1/2} N^{k/2-\e}},\]
    and \eqref{Ew} follows. \\

\noindent{\bf Acknowledgements:} We would like to thank the referee for carefully
  reading the manuscript and offering many suggestions that improved the quality of the
  results and exposition. We also thank 
  Charles Li, Steven J. Miller, Nigel Pitt, Chip Snyder, and Fan Zhou
  for numerous helpful conversations.  

\vskip .5cm
\small


\begin{thebibliography}{99}
\bibitem[BBDDM]{BBDDM} O. Barrett, P. Burkhardt, J. DeWitt, R. Dorward,
  and S. J. Miller, {\em One-level density for holomorphic cusp forms of arbitrary
  level,} Res. Number Theory 3 (2017), Art. 25.

\bibitem[BBR]{BBR} V. Blomer, J. Buttcane, and N. Raulf, {\em A Sato-Tate law
  for $\GL(3)$,} Comm. Math. Helv. 89 (2014), no. 4, 895-919.


\bibitem[BLGHT]{BLGHT} T. Barnet-Lamb, D. Geraghty, M. Harris,
  and R. Taylor, {\em A family of Calabi-Yau varieties and
potential automorphy II}, Publ. Res. Inst. Math. Sci. 47 (2011), no. 1, 29--98.

\bibitem[Br]{Br} R. Bruggeman, {\em Fourier coefficients of cusp forms,}
  Invent. Math. 45 (1978), no. 1, 1-18.

\bibitem[BrM]{BM} R. Bruggeman and R. Miatello, {\em Eigenvalues of Hecke 
  operators on Hilbert modular groups,}
Asian J. Math. 17 (2013), no. 4, 729-757. 

\bibitem[CDF]{CDF} B. Conrey, W. Duke, and D. Farmer, {\em The distribution
  of the eigenvalues of Hecke operators,} Acta Arith. 78 (1997), no. 4, 405--409.



\bibitem[FW]{FW} B. Feigon and D. Whitehouse, {\em Averages of central $L$-values
  of Hilbert modular forms with an application to subconvexity,} 
Duke Math. J. 149 (2009), no. 2, 347-410. 

\bibitem[FMP]{FMP} D. File, K. Martin, and A. Pitale, {\em  Test vectors and 
    central $L$-values for GL(2),}
Algebra Number Theory 11 (2017), no. 2, 253-318. 

\bibitem[GMR]{GMR} S. Gun, M. R. Murty, and P. Rath, {\em Summation methods 
  and distribution of eigenvalues of Hecke operators,}
Funct. Approx. Comment. Math. 39 (2008), part 2, 191-204.

\bibitem[GR]{GR} I. S. Gradshteyn  and I. M. Ryzhik, {\em Table of integrals, series,
  and products, 7th ed.,} Elsevier/Academic Press, San Diego, 2007.

\bibitem[Gu]{Gu} J. Guo, {\em On the positivity of the central critical values 
  of automorphic $L$-functions for $\GL(2)$},
  Duke Math. J. 83 (1996), no. 1, 157--190. 

\bibitem[ILS]{ILS} H. Iwaniec, W. Luo, and P. Sarnak, 
{\em Low lying zeros of families of $L$-functions,}
Inst. Hautes \'Etudes Sci. Publ. Math. No. 91 (2000), 55--131.

\bibitem[JK]{twists} J. Jackson and A. Knightly, {\em Averages of twisted $L$-functions,}
  J. Aust. Math. Soc. 99 (2015), no. 2, 207-236.

\bibitem[KL1]{KL} A. Knightly  and C. Li, 
   {\em Traces of Hecke operators}, Mathematical Surveys and Monographs, 133, Amer.
  Math. Soc., 2006.

\bibitem[KL2]{pethil} ------,
 {\em Petersson's trace formula and the Hecke eigenvalues of Hilbert modular forms,}
  in ``Modular forms on Schiermonnikoog," Cambridge University Press, 2008.

\bibitem[KL3]{petrr} ------, {\em Weighted averages of modular $L$-values,}
  Trans. Amer. Math. Soc. 362, no. 3 (2010), 1423-1443.

\bibitem[KL4]{ftf} ------, {\em Kuznetsov's formula and the Hecke eigenvalues of
  Maass forms,} Mem. Amer. Math. Soc. 224 (2013), no. 1055.

\bibitem[Ko]{Ko} E. Kowalski,
{\em Families of cusp forms,}
   Actes de la Conf\'erence ``Th\'eorie des Nombres et Applications'', 5-40, 
Publ. Math. Besan\c{c}on Alg\`ebre Th\'eorie Nr., Presses Univ. Franche-Comt\'e, Besan\c{c}on, 2013.

\bibitem[KS1]{KS1} N. Katz and P. Sarnak, {\em 
Random Matrices, Frobenius Eigenvalues and Monodromy}, AMS Colloquium 
  Publications 45, AMS, Providence, 1999.

\bibitem[KS2]{KS2} ------, {\em Zeros of zeta functions and symmetries,}
   Bull. Amer. Math. Soc. 36, 1999, 1-26.

\bibitem[KST]{KST} E. Kowalski, A. Saha, and J. Tsimerman, {\em
  Local spectral equidistribution for Siegel modular forms and applications,}
Compos. Math. 148 (2012), no. 2, 335-384.

\bibitem[Li1]{Li1} C. Li,
{\em Kuznietsov trace formula and weighted distribution of Hecke
  eigenvalues,} J. Number Theory  104  (2004),  no. 1, 177--192.

\bibitem[Li2]{Li2} ------, {\em On the distribution of Satake 
  parameters of $\GL_2$ holomorphic cuspidal representations,}
   Israel J. Math. 169 (2009), 341-373.


\bibitem[MR]{MR} P. Michel and D. Ramakrishnan, {\em Consequences of the Gross-Zagier formulae: 
  stability of average $L$-values, subconvexity, and non-vanishing mod $p$,}
   Number theory, analysis and geometry, 437-459, Springer, New York, 2012. 

\bibitem[Mi]{Mi} S. J. Miller, {\em An orthogonal test of the $L$-functions 
  ratios conjecture,} Proc. Lond. Math. Soc. (3) 99 (2009), no. 2, 484-520. 

\bibitem[MS]{MS} R. Murty and K. Sinha, {\em Effective equidistribution of eigenvalues
  of Hecke operators,} J. Number Theory 129 (2009), no. 3, 681--714.

\bibitem[MT]{MT} J. Matz and N. Templier, {\em Sato-Tate equidistribution for
  families of Hecke-Maass forms on $\SL(n,\R)/\SO(n)$}, preprint 
(\href{https://arxiv.org/abs/1505.07285}{arXiv:1505.07285}), 2016.

\bibitem[Na]{N} H. Nagoshi, {\em Distribution of Hecke eigenvalues},
Proc. Amer. Math. Soc. 134 (2006), no. 11, 3097-3106.

\bibitem[Ne]{Ne} M. Newman, {\em Integral matrices,}
Pure and Applied Mathematics, Vol. 45. Academic Press, New York-London, 1972.


\bibitem[RR1]{RR} D. Ramakrishnan and J. Rogawski, {\em Average values
  of modular $L$-series via the relative trace formula,}
  Pure Appl. Math. Q. 1 (2005), no. 4, 701-735.

\bibitem[RR2]{RR2} ------, {\em Erratum: Average values of modular L-series
   via the relative trace formula,} Pure Appl. Math. Q. 5 (2009),
   no. 4, 1469.

\bibitem[Sa]{Sa} P. Sarnak,
{\em Statistical properties of eigenvalues of the Hecke operators,}
 in: {\em Analytic number theory and Diophantine problems (Stillwater, OK, 1984)},
Progr. Math., 70, Birkh\"auser, Boston, MA, 1987, 321--331.

\bibitem[Se]{Serre} J.-P. Serre, {\em R\'epartition asymptotique des valeurs
  propres de l'op\'erateur de Hecke $T_p$,} J. Amer. Math. Soc., 10 (1997),
  no. 1, pp. 75-102.


\bibitem[ST]{ShT} S. W. Shin and N. Templier, {\em Sato-Tate theorem for families
  and low-lying zeros of automorphic L-functions,}
  Invent. Math. 203 (2016), no. 1, 1-177.

\bibitem[Su]{Su} S. Sugiyama, {\em Asymptotic behaviors of means of central values 
  of automorphic $L$-functions for GL(2),} J. Number Theory 156 (2015), 195-246.

\bibitem[SuT]{ST} S. Sugiyama and M. Tsuzuki, {\em Relative trace formulas and
  subconvexity estimates of $L$-functions for Hilbert modular forms,} 
Acta Arith. 176 (2016), no. 1, 1-63. 

\bibitem[T]{T} M. Tsuzuki, {\em Spectral means of central values of automorphic
  $L$-functions for $\GL(2)$,} Mem. Amer. Math. Soc. 235, (2015) no. 1110.

\bibitem[W]{W} Y. Wang, {\em The quantitative distribution of Hecke eigenvalues,}
  Bull. Aust. Math. Soc. 90 (2014), 28-36.

\bibitem[Z]{Z} F. Zhou, {\em Weighted Sato-Tate vertical distribution of the 
  Satake parameter of Maass forms on PGL(N),}
Ramanujan J. 35 (2014), no. 3, 405-425. 
\end{thebibliography}
\end{document}